\theoremstyle{thmstyleone}%
\newtheorem{theorem}{Theorem}
\theoremstyle{thmstyleone}
\newtheorem{proposition}{Proposition}%
\theoremstyle{thmstyleone}
\newtheorem{lemma}{Lemma}
\theoremstyle{thmstyletwo}%
\newtheorem{example}{Example}%
\theoremstyle{thmstylethree}%
\newtheorem{definition}{Definition}%
\begin{document}

\title[Sequential Henig proper optimality conditions...]{Sequential Henig proper optimality conditions for multiobjective fractional programming problems via sequential proper subdifferential calculus }


\author*[1]{\fnm{Mohamed Bilal}\sur{ Moustaid}}\email{bilalmoh39@gmail.com}
\equalcont{These authors contributed equally to this work.}

\author[1]{\fnm{Mohamed} \sur{Laghdir }}\email{laghdirm@gmail.com}

\equalcont{These authors contributed equally to this work.}

\author[1]{\fnm{Issam}\sur{ Dali}}\email{dali.issam@gmail.com}
\equalcont{These authors contributed equally to this work.}

\author[2]{\fnm{Ahmed}\sur{ Rikouane}}\email{rikouaneah@gmail.com}
\equalcont{These authors contributed equally to this work.}

\affil*[1]{\orgdiv{Department of mathematics}, \orgname{Chouaib Doukkali University}, \orgaddress{\street{} \city{El Jadida}, \postcode{24000}, \country{Morocco}}}

\affil[2]{\orgdiv{Department of mathematics}, \orgname{Ibn Zohr University}, \orgaddress{\street{} \city{Agadir}, \postcode{80000}, \country{Morocco}}}


\abstract{In this paper, in the absence of any constraint qualifications, we  develop sequential  necessary and sufficient optimality conditions for a constrained multiobjective fractional programming problem characterizing a Henig proper efficient solution in terms of the $\epsilon$-subdifferentials and the subdifferentials of the functions. This is achieved by employing a sequential Henig subdifferential calculus rule of the sums of $m \ (m\geq 2)$ proper convex vector valued mappings with a composition of two convex vector valued mappings. In order to present an example illustrating the main results of this paper, we establish the classical optimality conditions under Moreau-Rockafellar qualification condition. Our main results are presented in the setting of reflexive Banach space in order to avoid the use of nets.}

\keywords{sequential optimality conditions, Henig subdifferential, multiobjective fractional programming problem, Henig proper efficient solution}

\pacs[MSC Classification]{90C32, 90C46}

\maketitle

\section{Introduction}

Fractional multiobjective programming  has many applications such management science, operational research, economics and information theory (see Stancu-Minasian \cite{stancu, Sta}). However, to obtain optimality conditions for a multiobjective fractional problem which is not generally convex,  we often formulate an equivalent  vector convex problem by using a parametric approach, but such vector convex program requires a regularity condition as generalized Slater constraint qualification which is not an easy task to verify and may fail to hold neither for finite nor infinite dimensional convex program.
To raise this drawbacks, many contributions have been focused on the characterizations of sequential optimality condition for a vector convex problem or a fractional optimization problem which avoid a constraint qualification (see \cite{jevv, kohli, Kim, lag1, lag2, sun, thibault}). The recent  contributions of Moustaid-Laghdir-Dali-Rikouane in \cite{lagmou6001, conf1} motivate the present work where the sequential optimality conditions are stated, via sequential subdifferential calculus, in terms of limits of sequences in approximate subdifferential or in exact subdifferential at neabry points. So, the purpose of this paper is to establish sequential optimality conditions for a fractional optimization problem without any constraint qualifications characterizing completely a Henig efficient solution. Firstly, we establish a sequential Henig subdifferential calculus rule of the sums of $m \ (m\geq 2)$ proper convex vector valued mappings with a composition of two convex vector valued mappings. This is achieved by employing a scalarization process and the epigraph of the conjugate of the sums of $m \ (m\geq 2)$ proper convex lower semicontinuous functions. Secondly, since a multiobjective fractional problem  is  transformed equivalently into a convex multiobjective problem, we  apply the sequential Henig subdifferential calculus rule to  obtain three  different kinds of sequential optimality conditions.  The first  is expressed in terms of the epigraphs of the conjugate of data functions, the second is obtained by means  of a sequence of  $\epsilon$-subdifferentials  at
a minimizer and the last kind is described, by means of the  Br{\o}ndsted-Rockafellar theorem,  in terms of the exact subdifferentials of the functions involved at nearby points to the minimizer.

This paper is organized as follows. In section 2, we recall some notions and we give some preliminary results. In section 3, we establish a sequential Henig subdifferential calculus rule of the sums of $m \ (m\geq 2)$ proper convex vector valued mappings with a composition of two convex vector valued mappings. In section 4, we provide some sequential efficient optimality conditions characterizing a Henig proper solution for a vector fractional optimization problem. In order to present an example illustrating the main result of this work, we will need to  establish the standard optimality conditions  of a multiobjective fractional problem under a constraint qualification.
\section{Preliminaries and basic definitions}
In this section, we give some definitions and preliminary results which will be used throughout this paper. Let $X,$ $Y$ and $Z$ be three real topological vector spaces and their respective continuous dual spaces $X^{*},$ $Y^{*}$ and $Z^{*}$ with duality paring denoted by $\langle .,.\rangle.$ We will use the symbol $w^{*}$ for the weak-star topology on the dual space and $\tau_{\mathbb{R}}$ for the Euclidian topology on the real line $\mathbb{R}.$ Let $Z_{+}\subset Z$ be a nontrivial convex cone and we assume in addition that $Z_{+}$ is pointed (i.e. $-Z_{+}\cap Z_{+}=\{0\}).$
The nonnegative orthant of $\mathbb{R}^{m}$ is denoted by $\mathbb{R}^{m}_{+}.$ The polar cone and strict polar cone are defined respectively as
\[ Z_+^{*}:= \{ z^* \in Z^* : \langle z^*, z\rangle\geq 0,\;\forall z\in Z_+\} \]
and
\[ (Z_+^{*})^\circ:= \{ z^* \in Z^* : \langle z^*, z\rangle> 0,\;\forall z\in Z_+\backslash\{0\}\}. \]
 On $Z$ we define for any $z_{1},z_{2}\in Z$ the following relations \begin{eqnarray*}
z_1  & \leq_{Z_+} & z_2 \iff z_2 - z_1 \in Z_+,\\[0.5mm]
z_1 & \lneqq_{Z_+} & z_2 \iff z_2 - z_1 \in Z_+\backslash \{0\}.
\end{eqnarray*}
We attach to $Z$ an abstract maximal element, denoted by $+\infty_{Z},$  with respect to $"\leq_{Z_+}"$ and we
denote $\overline{Z}=Z\cup\{+\infty_Z\}.$ Then for every $z\in Z$ one has $z\leq_{Z_+}+\infty_{Z}.$  The algebraic operations of $Z$ are extended as follows
\[z+(+\infty_Z)=(+\infty_Z)+z = +\infty_Z, \quad \forall z \in Z\cup \{+\infty_Z\}, \]
\[ \alpha \cdot (+\infty_Z)  =  +\infty_Z,  \quad \forall \alpha \geq 0. \]
 If $Z=\mathbb{R}$ and $K=\mathbb{R}_{+}$ then $\overline{\mathbb{R}}=\mathbb{R}\cup\{+\infty\},$ where $+\infty=+\infty_{\mathbb{R}}.$\\
For a given mapping $f$ : $X\longrightarrow \overline{Z},$ we denote respectively the effective domain and the epigraph of $f$ by $\mathrm{dom}f$ and $\mathrm{epi}f,$ i.e.,
\[\mathrm{dom}f:=\{x \in X : f(x) \in Z\}\]
\[\qquad\quad \mathrm{epi}f:=\{(x,z) \in X \times Z : f(x) \leq_{Z_+} z \} .\]
We say that $f$ is proper if $\mathrm{dom}f\neq \emptyset$ and $Z_{+}$-epi-closed if its epigraph is closed. Let us note for each $z^{*}\in Z_{+}^{*},$ we put by convention $\langle z^{*}, f(x)\rangle=+\infty,$ for any $x\notin\mathrm{dom}f.$ Thus, $z^{*}\circ f\;:\; X\longrightarrow\overline{\mathbb{R}}$ and $\mathrm{dom}(z^{*}\circ f)=\mathrm{dom}f$ and we say that $f$ is strict star $Z_+$-lower semicontinuous if $z^{*}\circ f$ is lower semicontinuous for all $z^{*}\in (Z_{+}^{*})^{\circ}.$ Moreover, we recall that the mapping $f$ is $Z_{+}$-convex if for every $\lambda\in[0,1]$ and $x_{1},x_{2}\in X$ we have
\[
 f(\lambda x_1 + (1-\lambda) x_2) \leq _{Z_+} \lambda  f(x_1) + (1-\lambda) f(x_2).
\]
Let $"\leq_{Y_{+}}"$ be the partial order on $Y$ induced a nonempty convex cone $Y_{+}\subset Y,$ we say that the mapping $g\;:\; Y\longrightarrow \overline{Z}$ is $(Y_{+},Z_{+})$-nondecreasing if for each $y_{1},y_{2}\in Y$ we have
\[y_{1}\leq_{Y_{+}} y_{2}\Longrightarrow g(y_{1})\leq_{Z_{+}} g(y_{2}).\]
Let  $h\;:\; X\longrightarrow \overline{Y}$ be a mapping,  then the composed mapping $g\circ h\;:\; X\longrightarrow \overline{Z}$ is defined by
\[
(g\circ h)(x):= \left\{
\begin{array}{ll}
 g(h(x)),\ \ \hbox{if} \ \ x \in {\mathrm{dom}\ h},\\
\\[1mm]
 +\infty_Z, \quad \; \hbox{otherwise.}
\end{array}
\right.
\]
It is easy to see that if $g\;:\; Y\longrightarrow \overline{Z}$ is $Z_{+}$-convex, $(Y_{+},Z_{+})$-nondecreasing and $h\;:\; X\longrightarrow \overline{Y}$ is $Y_{+}$-convex, then the composed mapping $g\circ h$ is $Z_{+}$-convex.

Now, we consider the following vector minimization problem
\[(VMP) \quad\min_{x\in C} f(x) \]
\noindent where $f\;:\; X\longrightarrow \overline{Z}$ is a mapping and $C$ is a nonempty subset of $X.$
\begin{definition}\label{der}
Let $\overline{x}$ be a feasible point of (VMP) i.e. $\overline{x}\in C.$  The point $\overline{x}$ is called properly efficient solution of the problem (VMP) in the sense of Henig if  there exists a convex cone $\ \hat{Z}_{+}\subset Z$  such that  $Z_{+}\setminus\{0\}\subset  \mathrm{int}\hat{Z}_{+}$ and
\[\nexists x\in C,\quad f(x)\lneqq_{\hat{Z}_{+}} f(\overline{x}).\]
\end{definition}
The set of Henig properly efficient solutions will be denoted by $E^{p}(f,C).$ The above notion of Henig properly efficient solution leads us to define the notion of Henig proper subdifferential of a vector valued mapping $f\;:\; X\longrightarrow \overline{Z}$ at $\overline{x}\in \mathrm{dom}f$ (see El Maghri-Laghdir\cite{maglag}). For $A\in L(X,Z)$
 \begin{multline*}
A\in \partial^{p} f(\bar x) \text{ if there exists}\;\hat{Z}_{+} \subsetneq Z\; \text{ convex cone such that } Z_{+} \setminus\{0\}\subseteq \mathrm{int} \hat{Z}_{+}, \,\\ \nexists x \in C,\  f( x)-f(\bar x)\lneqq_{\hat{Z}_+} A(x-\overline{x}),\;\forall x\in X
\end{multline*}
where $L(X,Z)$ denotes the space of linear continuous operators from X to Z.
This definition is justified by the importance of the following immediate property
\[\bar x \in E^{p}(f, X) \Longleftrightarrow 0 \in \partial^p
f(\bar x) .\]
For a given function $f\;:\; X\longrightarrow \overline{\mathbb{R}},$ the conjugate function $f^{*}\;:\; X^{*}\longrightarrow \overline{\mathbb{R}}\cup\{-\infty\}$ is defined by
\[f^{*}(x^{*}):=\underset{x\in X}\sup\{\langle x^{*},x\rangle-f(x)\}.\]
Recall that, for $\epsilon\geq 0,$ the $\epsilon$-subdifferential of $f$ at $\overline{x}\in \mathrm{dom}f$ is defined by
\[\partial_{\epsilon} f(\overline{x}):=\{x^{*}\in X^{*}\ :\  \langle x^{*},x-\overline{x}\rangle+f(\overline{x})-\epsilon\leq
f(x),\quad \forall x\in X\}.\]
If $\epsilon=0,$ the set $\partial f(\overline{x}):=\partial_{0}f(\overline{x})$ is the classical subdifferential of convex analysis, that is
\[\partial_{} f(\overline{x}):=\{x^{*}\in X^{*}\ :\  \langle x^{*},x-\overline{x}\rangle+f(\overline{x})\leq
f(x),\quad \forall x\in X\}.\]
Moreover, the Young-Fenchel equality holds
\[f^{*}(x^{*})+f(\overline{x})=\langle x^{*},\overline{x}\rangle\Longleftrightarrow x^{*}\in \partial f(\overline{x}).\]
As consequence of that
\[\left(x^{*},\langle x^{*},\overline{x}\rangle-f(\overline{x})\right)\in\mathrm{epi}f^{*}, \qquad  \forall x^{*}\in \partial f(\overline{x}).\]
For any subset $C\subset X,$ the vector indicator mapping $\delta^v_C : X \longrightarrow \overline{Z}$ of $C$ is given by
\[
\begin{array}{ll}
\delta^v_C(x)=
 \left\{
\begin{array}{ll}
 0 \qquad \quad \hbox{if } x \in C \\[1mm]
 +\infty_Z \quad \text{otherwise.}
\end{array}
\right.
\end{array}
\]
\noindent When $Z=\mathbb{R},$ the scalar indicator function is denoted by $\delta_C $. The vector indicator mapping appears to possess properties like the scalar one. For $\epsilon\geq 0,$ the $\epsilon$-normal at $\overline{x}\in C$ is defined by
\[N_{\epsilon}(\overline{x},C):=\{x^{*}\in X^{*}:\quad \langle x^{*},x-\overline{x}\rangle\leq\epsilon,\quad \forall x\in C\}.\]
If $\epsilon=0,$ $N(\overline{x},C):=N_{0}(\overline{x},C)$ is the usual normal cone at $\overline{x}$. Moreover, it is easy too see that the $\epsilon$-normal of $C$ at $\overline{x}$ is defined as the $\epsilon$-subdifferential of $\delta_{C}$ at $\overline{x}.$
\smallskip

The following theorem characterizes the epigraph of the conjugate of the sums of $m \ (m\geq2)$ functions (see  Bo\c{t}-Grad-Wanka \cite{botlivre}, Theorem 2.3.10, p. 37).
\begin{theorem}
  \label{epiofthesum}
 Let $f_{i}$ : $X\longrightarrow\overline{\mathbb{R}}$ be $m \ (m\geq2)$ proper, convex
and lower semicontinuous functions such that $\displaystyle \bigcap_{i=1}^{m}\mathrm{dom}f_{i}\neq\emptyset.$ Then  \[\mathrm{epi}(\sum_{i=1}^{m}f_{i})^{*}=\mathrm{cl}_{w^*\times \tau_{\mathbb{R}}}(\sum_{i=1}^{m}\mathrm{epi}f_{i}^{*})\]
where $cl_{w^*\times \tau_{\mathbb{R}}}$ denotes the weak closure on the product space $X^{*}\times \mathbb{R}.$
\end{theorem}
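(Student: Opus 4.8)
The plan is to derive the identity from the Fenchel--Moreau theorem together with elementary epigraph and infimal-convolution bookkeeping; the hypothesis $\bigcap_{i=1}^{m}\mathrm{dom}f_{i}\neq\emptyset$ enters only to make $\sum_{i=1}^{m}f_{i}$ proper, which in turn makes all the biconjugates involved well behaved. I would begin with the elementary inclusion $\sum_{i=1}^{m}\mathrm{epi}f_{i}^{*}\subseteq\mathrm{epi}\bigl(\sum_{i=1}^{m}f_{i}\bigr)^{*}$: if $(x_{i}^{*},r_{i})\in\mathrm{epi}f_{i}^{*}$ for each $i$, then $\langle x_{i}^{*},x\rangle-f_{i}(x)\leq r_{i}$ for every $x\in X$, and summing over $i$ and taking the supremum over $x$ gives $\bigl(\sum_{i}f_{i}\bigr)^{*}\bigl(\sum_{i}x_{i}^{*}\bigr)\leq\sum_{i}r_{i}$. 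Since $\mathrm{epi}\bigl(\sum_{i}f_{i}\bigr)^{*}$, being the epigraph of a conjugate, is an intersection of $w^{*}\times\tau_{\mathbb{R}}$-closed half-spaces (one for each point of the domain of $\sum_{i}f_{i}$), it is $w^{*}\times\tau_{\mathbb{R}}$-closed, so the inclusion persists after closing the left-hand side; this settles the inclusion ``$\supseteq$''.

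For ``$\subseteq$'' I would introduce the infimal convolution $\psi:X^{*}\to\overline{\mathbb{R}}$ defined by $\psi(x^{*})=\inf\bigl\{\sum_{i=1}^{m}f_{i}^{*}(x_{i}^{*}):x_{1}^{*},\dots,x_{m}^{*}\in X^{*},\ \sum_{i=1}^{m}x_{i}^{*}=x^{*}\bigr\}$, which is convex as an infimal convolution of convex functions. A one-line computation interchanging the two suprema yields the \emph{exact} conjugate formula $\psi^{*}=\sum_{i=1}^{m}(f_{i}^{*})^{*}=\sum_{i=1}^{m}f_{i}$, the second equality being the Fenchel--Moreau theorem applied to each proper convex lower semicontinuous $f_{i}$. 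Consequently $\psi^{**}=\bigl(\sum_{i}f_{i}\bigr)^{*}$, and since $\psi$ is convex and, because $\sum_{i}f_{i}$ is proper, is minorized by a continuous affine function, one has $\mathrm{epi}\,\psi^{**}=\mathrm{cl}_{w^{*}\times\tau_{\mathbb{R}}}(\mathrm{epi}\,\psi)$ (the epigraph of the lower semicontinuous hull is the closure of the epigraph). Hence $\mathrm{epi}\bigl(\sum_{i}f_{i}\bigr)^{*}=\mathrm{cl}_{w^{*}\times\tau_{\mathbb{R}}}(\mathrm{epi}\,\psi)$.

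It remains to show $\mathrm{cl}_{w^{*}\times\tau_{\mathbb{R}}}(\mathrm{epi}\,\psi)=\mathrm{cl}_{w^{*}\times\tau_{\mathbb{R}}}\bigl(\sum_{i=1}^{m}\mathrm{epi}f_{i}^{*}\bigr)$, which I would do by a two-sided squeeze. The inclusion $\sum_{i}\mathrm{epi}f_{i}^{*}\subseteq\mathrm{epi}\,\psi$ is immediate from $\psi\bigl(\sum_{i}x_{i}^{*}\bigr)\leq\sum_{i}f_{i}^{*}(x_{i}^{*})$. Conversely, if $\psi(x^{*})\leq r$, then for each $\varepsilon>0$ the definition of the infimum furnishes a decomposition $x^{*}=\sum_{i}x_{i}^{*}$ with $\sum_{i}f_{i}^{*}(x_{i}^{*})<r+\varepsilon$ (this uses that $\psi$ never takes the value $-\infty$, which follows from $\psi^{*}=\sum_{i}f_{i}$ being proper; in particular each $f_{i}^{*}(x_{i}^{*})$ above is finite), and distributing the slack $r+\varepsilon-\sum_{i}f_{i}^{*}(x_{i}^{*})>0$ among the $m$ coordinates shows $(x^{*},r+\varepsilon)\in\sum_{i}\mathrm{epi}f_{i}^{*}$; letting $\varepsilon\downarrow0$ gives $\mathrm{epi}\,\psi\subseteq\mathrm{cl}_{w^{*}\times\tau_{\mathbb{R}}}\bigl(\sum_{i}\mathrm{epi}f_{i}^{*}\bigr)$. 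Taking closures in both inclusions yields the desired equality, and combining it with the two previous paragraphs completes the proof. I expect the main technical care to lie precisely in this last step, namely the $\pm\infty$ bookkeeping for $\psi$ and, more generally, in making sure that every conjugate, biconjugate and lower semicontinuous hull is taken with respect to the pairing $\langle X^{*},X\rangle$ and the topology $w^{*}\times\tau_{\mathbb{R}}$, so that the Fenchel--Moreau theorem genuinely applies and the closure operations match those in the statement.
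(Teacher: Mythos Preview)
The paper does not actually prove this theorem: it is quoted verbatim as Theorem~2.3.10 from Bo\c{t}--Grad--Wanka, \emph{Duality in Vector Optimization}, and is used only as a black box in the proof of Theorem~\ref{theorem1}. There is therefore no ``paper's own proof'' to compare against.

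That said, your argument is correct and is essentially the standard proof one finds in that reference. The easy inclusion $\sum_i\mathrm{epi}f_i^{*}\subseteq\mathrm{epi}(\sum_i f_i)^{*}$ together with the $w^{*}\times\tau_{\mathbb{R}}$-closedness of any conjugate epigraph gives ``$\supseteq$''. For the reverse inclusion you correctly identify the infimal convolution $\psi=f_1^{*}\,\square\cdots\square\,f_m^{*}$ as the key object: the identity $\psi^{*}=\sum_i f_i^{**}=\sum_i f_i$ (Fenchel--Moreau on each $f_i$) gives $\psi^{**}=(\sum_i f_i)^{*}$, and since $\psi$ is convex and proper (the hypothesis $\bigcap_i\mathrm{dom}f_i\neq\emptyset$ being used exactly here, to make $\sum_i f_i$ proper and hence $\psi>-\infty$), one has $\mathrm{epi}\,\psi^{**}=\mathrm{cl}_{w^{*}\times\tau_{\mathbb{R}}}(\mathrm{epi}\,\psi)$. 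Your final step, showing that $\mathrm{epi}\,\psi$ and $\sum_i\mathrm{epi}f_i^{*}$ share the same closure via the $\varepsilon$-slack argument, is clean; the only thing I would make explicit is that each summand $f_i^{*}(x_i^{*})$ in the near-optimal decomposition is finite because $f_i$ proper forces $f_i^{*}>-\infty$, which you do mention. The caution you raise at the end---that all conjugates and closures must be taken with respect to the pairing $\langle X^{*},X\rangle$ and the topology $\sigma(X^{*},X)\times\tau_{\mathbb{R}}$---is exactly the point that needs care, and you have handled it.
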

 The following theorem, show how  the epigraph of the conjugate function
can be expressed in terms of $\epsilon$-subdifferential (see Jeyakumar-Lee-Dinh \cite{jevv}).
\begin{theorem}\label{jekumyaar}
Let $f$ : $X\longrightarrow\overline{\mathbb{R}}$ be a proper convex lower
 semicontinuous  function and let $\overline{x}\in \mathrm{dom}f.$ Then
\[\mathrm{epi}{f}^{*}=\bigcup_{\epsilon \geq0}\{(x^{*},\langle x^{*},\overline{x}\rangle+\epsilon-f(\overline{x}))\; :\;
x^{*}\in\partial_{\epsilon}f(\overline{x})\}.\]
\end{theorem}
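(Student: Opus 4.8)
The plan is to prove the claimed set equality by a direct double inclusion, relying only on the definition of the conjugate, the definition of the $\epsilon$-subdifferential, and the Young--Fenchel inequality recalled above; convexity and lower semicontinuity of $f$ will enter only implicitly through properness, which guarantees $f(\overline{x})\in\mathbb{R}$.

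For the inclusion $\supseteq$, I would start from an arbitrary $\epsilon\geq 0$ and $x^{*}\in\partial_{\epsilon}f(\overline{x})$, and set $r:=\langle x^{*},\overline{x}\rangle+\epsilon-f(\overline{x})$. Unwinding the defining inequality of $\partial_{\epsilon}f(\overline{x})$, namely $\langle x^{*},x-\overline{x}\rangle+f(\overline{x})-\epsilon\leq f(x)$ for every $x\in X$, and rearranging yields $\langle x^{*},x\rangle-f(x)\leq r$ for all $x$; taking the supremum over $x\in X$ gives $f^{*}(x^{*})\leq r$, i.e. $(x^{*},r)\in\mathrm{epi}f^{*}$, which is of the required form.

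For the reverse inclusion, I would take $(x^{*},r)\in\mathrm{epi}f^{*}$, i.e. $f^{*}(x^{*})\leq r$. Since $\overline{x}\in\mathrm{dom}f$, the Young--Fenchel inequality gives $f^{*}(x^{*})\geq\langle x^{*},\overline{x}\rangle-f(\overline{x})$, so the quantity $\epsilon:=r+f(\overline{x})-\langle x^{*},\overline{x}\rangle$ is nonnegative, and by construction $r=\langle x^{*},\overline{x}\rangle+\epsilon-f(\overline{x})$. It then remains to check $x^{*}\in\partial_{\epsilon}f(\overline{x})$: for every $x\in X$ one has $\langle x^{*},x\rangle-f(x)\leq f^{*}(x^{*})\leq r$, which after rearrangement is exactly $\langle x^{*},x-\overline{x}\rangle+f(\overline{x})-\epsilon\leq f(x)$, as needed.

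Since the argument is entirely elementary, there is no genuine obstacle; the only point requiring care is to observe that properness of $f$ together with $\overline{x}\in\mathrm{dom}f$ is precisely what rules out $f^{*}(x^{*})=-\infty$ and thereby makes the candidate $\epsilon$ well defined and nonnegative (the convexity and lower semicontinuity hypotheses are not actually needed for this identity itself, though they matter for the surrounding theory). Alternatively, both inclusions can be packaged together by noting that $(x^{*},r)\in\mathrm{epi}f^{*}$ is equivalent to $\langle x^{*},x\rangle-r\leq f(x)$ for all $x\in X$, and that evaluating the affine minorant $x\mapsto\langle x^{*},x\rangle-r$ at the point $\overline{x}$ produces exactly the defect $\epsilon=f(\overline{x})-(\langle x^{*},\overline{x}\rangle-r)$ measuring how far $x^{*}$ lies from $\partial f(\overline{x})$.
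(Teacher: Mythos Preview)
Your argument is correct: the double inclusion goes through exactly as you describe, and your remark that convexity and lower semicontinuity are not actually needed for the bare identity (properness together with $\overline{x}\in\mathrm{dom}f$ suffices to make $\epsilon$ finite and nonnegative) is accurate.

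There is nothing to compare against, however: the paper does not supply its own proof of this theorem but simply quotes it as a known result from Jeyakumar--Lee--Dinh \cite{jevv}. Your elementary double-inclusion argument is in fact the standard proof of this identity, so you have filled in what the paper leaves to the literature.
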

Let us recall a Br{\o}ndsted-Rockafellar theorem (see Thibault \cite{thibault}).
\begin{theorem}
\label{thibault} Let $X$ be a Banach space and $f$ : $X\longrightarrow\overline{{\mathbb{R}}}$  be a proper, convex and lower semicontinuous function.\ Then for any real $\epsilon>0$ and $\overline{x}^{*}\in\partial_{\epsilon}f(\overline{x}),$ there exist $x\in\mathrm{dom}f$ and $x_{}^{\ast }\in \partial f(x_{})$ such that
\begin{enumerate}
  \item $\left\Vert x_{ }-\overline{x}\right\Vert \leq \sqrt{\epsilon },$
  \item $\left\Vert x_{}^{\ast }-\overline{x}^{\ast }\right\Vert \leq \sqrt{\epsilon},$
  \item $\left\vert f(x_{})-f(\overline{x})-\left\langle x_{}^{\ast
},x_{}-\overline{x}\right\rangle \right\vert \leq 2\epsilon.$
\end{enumerate}
\end{theorem}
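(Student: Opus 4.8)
The plan is to obtain the three assertions from Ekeland's variational principle combined with the Moreau--Rockafellar sum rule, which is the standard route for a Br{\o}ndsted--Rockafellar type statement in a Banach space. First I would record that $\overline{x}^{*}\in\partial_{\epsilon}f(\overline{x})$ forces $\overline{x}\in\mathrm{dom}f$ and means $f(y)\geq f(\overline{x})+\langle\overline{x}^{*},y-\overline{x}\rangle-\epsilon$ for all $y\in X$. Then I introduce the perturbed function $g:X\longrightarrow\overline{\mathbb{R}}$, $g(y):=f(y)-\langle\overline{x}^{*},y-\overline{x}\rangle$, which is proper, convex and lower semicontinuous as the sum of $f$ with a continuous affine term; the previous inequality reads $g(y)\geq g(\overline{x})-\epsilon$ for all $y\in X$, so $g$ is bounded below and $\overline{x}$ is an $\epsilon$-approximate minimizer of $g$.

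Since $X$ is complete, I would then apply Ekeland's variational principle to $g$ with the tuning parameter $\sqrt{\epsilon}$: there exists $x\in\mathrm{dom}g=\mathrm{dom}f$ such that $g(x)\leq g(\overline{x})$, $\|x-\overline{x}\|\leq\sqrt{\epsilon}$ --- which is item (1) --- and
\[g(x)\leq g(y)+\sqrt{\epsilon}\,\|y-x\|\qquad\text{for all }y\in X .\]
Thus $x$ is a global minimizer of $y\mapsto g(y)+\sqrt{\epsilon}\,\|y-x\|$, i.e. $0\in\partial\!\left(g+\sqrt{\epsilon}\,\|\cdot-x\|\right)(x)$. Because $y\mapsto\sqrt{\epsilon}\,\|y-x\|$ is finite and continuous on all of $X$, the Moreau--Rockafellar sum rule applies and gives $0\in\partial g(x)+\sqrt{\epsilon}\,B_{X^{*}}$, where $B_{X^{*}}:=\{v^{*}\in X^{*}:\|v^{*}\|\leq 1\}$, since $\partial(\sqrt{\epsilon}\,\|\cdot-x\|)(x)=\sqrt{\epsilon}\,B_{X^{*}}$. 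Hence there is $u^{*}\in\partial g(x)$ with $\|u^{*}\|\leq\sqrt{\epsilon}$; as $g$ and $f$ differ by a continuous affine function, $\partial g(x)=\partial f(x)-\overline{x}^{*}$, so $x^{*}:=u^{*}+\overline{x}^{*}\in\partial f(x)$ and $\|x^{*}-\overline{x}^{*}\|=\|u^{*}\|\leq\sqrt{\epsilon}$, which is item (2).

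For item (3) I would simply combine the two subdifferential inequalities now available: $\overline{x}^{*}\in\partial_{\epsilon}f(\overline{x})$ yields $f(x)-f(\overline{x})\geq\langle\overline{x}^{*},x-\overline{x}\rangle-\epsilon$, while $x^{*}\in\partial f(x)$ yields $f(x)-f(\overline{x})\leq\langle x^{*},x-\overline{x}\rangle$. Subtracting $\langle x^{*},x-\overline{x}\rangle$ from the chain shows $f(x)-f(\overline{x})-\langle x^{*},x-\overline{x}\rangle\leq 0$ and $f(x)-f(\overline{x})-\langle x^{*},x-\overline{x}\rangle\geq\langle\overline{x}^{*}-x^{*},x-\overline{x}\rangle-\epsilon\geq-\|\overline{x}^{*}-x^{*}\|\,\|x-\overline{x}\|-\epsilon\geq-2\epsilon$, using items (1) and (2); hence $|f(x)-f(\overline{x})-\langle x^{*},x-\overline{x}\rangle|\leq 2\epsilon$. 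The only non-elementary ingredient is Ekeland's variational principle, which is exactly where the completeness hypothesis on $X$ is used; the remaining work is purely bookkeeping, the one point requiring mild care being that the sum rule needs the continuity of the norm term (available since it is real-valued everywhere) and that the Ekeland parameter must be taken equal to $\sqrt{\epsilon}$ in order to make items (1) and (2) symmetric.
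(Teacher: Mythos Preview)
Your argument is correct and is the standard Ekeland-based derivation of the Br{\o}ndsted--Rockafellar theorem with Thibault's additional estimate (3). Note, however, that the paper does not supply its own proof of this statement: Theorem~\ref{thibault} is merely recalled from Thibault~\cite{thibault} and then used as a black box in the proof of Theorem~\ref{4.4}. So there is no in-paper proof to compare against; your write-up supplies precisely the argument one finds in the cited source (Ekeland's principle applied to the tilted function $g(y)=f(y)-\langle\overline{x}^{*},y-\overline{x}\rangle$ with parameter $\sqrt{\epsilon}$, followed by the sum rule for $g+\sqrt{\epsilon}\,\|\cdot-x\|$), and the bookkeeping for item~(3) from items~(1) and~(2) is clean.
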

 We conclude this section by recalling the following important scalarization theorem (see El Maghri-Laghdir \cite{maglag}).

 \begin{theorem}\label{scalarization} Let $X, \ Z$  be two Hausdorff topological vector spaces and  $ f: X \longrightarrow \overline{Z}$ be $ Z_+$-convex vector valued mapping. For $\bar{x} \in X$, we have
\[\partial^{p} f(\bar x) = \bigcup_{z^* \in (Z_+^{*})^{\circ}} \{ A \in L(X,Z) : z^*\circ A \in \partial (z^*\circ f)(\bar x)\}.\]

\end{theorem}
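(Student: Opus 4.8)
The plan is to establish the two inclusions separately; I take $\bar x\in\mathrm{dom}\,f$ as in the definitions. The inclusion ``$\supseteq$'' is direct. Let $z^*\in(Z_+^*)^{\circ}$ and $A\in L(X,Z)$ satisfy $z^*\circ A\in\partial(z^*\circ f)(\bar x)$. I would take as Henig dilating cone
\[ \hat Z_+:=\{z\in Z:\ \langle z^*,z\rangle>0\}\cup\{0\}. \]
Since $Z_+\setminus\{0\}\neq\emptyset$ and $\langle z^*,\cdot\rangle$ is strictly positive there, $z^*\neq 0$, so $\hat Z_+$ is a convex cone with $\hat Z_+\subsetneq Z$; the open set $\{z:\langle z^*,z\rangle>0\}$ is contained in $\hat Z_+$ and contains $Z_+\setminus\{0\}$, hence $Z_+\setminus\{0\}\subseteq\mathrm{int}\,\hat Z_+$, so $\hat Z_+$ is admissible in the definition of $\partial^{p}f(\bar x)$. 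If some $x\in X$ satisfied $f(x)-f(\bar x)\lneqq_{\hat Z_+}A(x-\bar x)$, then necessarily $x\in\mathrm{dom}\,f$ and $A(x-\bar x)-f(x)+f(\bar x)\in\hat Z_+\setminus\{0\}$, whence $\langle z^*,A(x-\bar x)\rangle+\langle z^*,f(\bar x)\rangle>\langle z^*,f(x)\rangle$, contradicting $z^*\circ A\in\partial(z^*\circ f)(\bar x)$. Thus $A\in\partial^{p}f(\bar x)$.

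For the inclusion ``$\subseteq$'', let $A\in\partial^{p}f(\bar x)$ and fix a witnessing convex cone $\hat Z_+\subsetneq Z$ with $Z_+\setminus\{0\}\subseteq\mathrm{int}\,\hat Z_+$, so that no $x\in X$ fulfils $f(x)-f(\bar x)\lneqq_{\hat Z_+}A(x-\bar x)$; equivalently, with $S:=\{A(x-\bar x)-f(x)+f(\bar x):\ x\in\mathrm{dom}\,f\}$, one has $S\cap(\hat Z_+\setminus\{0\})=\emptyset$. I would first record that $\mathrm{int}\,\hat Z_+$ is a nonempty open convex cone with $0\notin\mathrm{int}\,\hat Z_+$ (otherwise, being a cone, $\hat Z_+$ would be all of $Z$) and satisfying $\mathrm{int}\,\hat Z_+ + Z_+\subseteq\mathrm{int}\,\hat Z_+$ (since $\hat Z_+$ is a convex cone and $Z_+\subseteq\hat Z_+$). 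Using the $Z_+$-convexity of $f$ and the linearity of $A$, the set $S':=S-Z_+$ is convex and contains $0$; moreover $S'\cap\mathrm{int}\,\hat Z_+=\emptyset$, because $s-z\in\mathrm{int}\,\hat Z_+$ with $s\in S$, $z\in Z_+$ would yield $s=(s-z)+z\in\mathrm{int}\,\hat Z_+\subseteq\hat Z_+\setminus\{0\}$, contradicting $S\cap(\hat Z_+\setminus\{0\})=\emptyset$.

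Next I would separate the open convex set $\mathrm{int}\,\hat Z_+$ from the convex set $S'$ — a separation valid in any topological vector space — obtaining $z^*\in Z^*\setminus\{0\}$ and $\gamma\in\mathbb{R}$ with $\langle z^*,w\rangle<\gamma\leq\langle z^*,s'\rangle$ for all $w\in\mathrm{int}\,\hat Z_+$ and $s'\in S'$. Since $0\in S'$ and $\mathrm{int}\,\hat Z_+$ is a cone, a scaling argument forces $\gamma=0$; replacing $z^*$ by $-z^*$, we get $\langle z^*,w\rangle>0$ for every $w\in\mathrm{int}\,\hat Z_+$ and $\langle z^*,s\rangle\leq 0$ for every $s\in S$. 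The first property, together with $Z_+\setminus\{0\}\subseteq\mathrm{int}\,\hat Z_+$, gives $z^*\in(Z_+^*)^{\circ}$; the second, taken with $s=A(x-\bar x)-f(x)+f(\bar x)$ for $x\in\mathrm{dom}\,f$, says $\langle z^*\circ A,x-\bar x\rangle+(z^*\circ f)(\bar x)\leq(z^*\circ f)(x)$, an inequality that holds trivially for $x\notin\mathrm{dom}\,f$ as well (the right side being $+\infty$); hence $z^*\circ A\in\partial(z^*\circ f)(\bar x)$, so $A$ lies in the right-hand side.

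I expect the heart of the argument to be the inclusion ``$\subseteq$''. The two points to be careful with are: checking that $S-Z_+$ is convex and disjoint from $\mathrm{int}\,\hat Z_+$ — this is exactly where the $Z_+$-convexity of $f$ and the absorption property $\mathrm{int}\,\hat Z_+ + Z_+\subseteq\mathrm{int}\,\hat Z_+$ of a convex cone are used — and ensuring that the separating functional lands in the \emph{strict} polar cone $(Z_+^*)^{\circ}$ rather than merely in $Z_+^*$, which is precisely why one separates from the open set $\mathrm{int}\,\hat Z_+$ (so the inequality is strict on the dilated cone). The remaining verifications (the properties of $\hat Z_+$, the scaling argument pinning $\gamma=0$, and the sign normalisation) are routine.
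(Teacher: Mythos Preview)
The paper does not supply a proof of this theorem: it is quoted from El~Maghri--Laghdir \cite{maglag} as a known scalarization result, so there is no in-paper argument to compare against. Your proposal is correct and is in fact the standard route for such scalarization theorems: the easy inclusion uses the half-space cone $\hat Z_+=\{z:\langle z^*,z\rangle>0\}\cup\{0\}$ as a Henig dilating cone, while the harder inclusion separates the open convex cone $\mathrm{int}\,\hat Z_+$ from the convex set $S-Z_+$ and reads off both $z^*\in(Z_+^*)^{\circ}$ (from strict positivity on $\mathrm{int}\,\hat Z_+\supseteq Z_+\setminus\{0\}$) and the subgradient inequality for $z^*\circ f$. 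Two small points worth tightening: (i) in deducing $\gamma=0$ it is cleanest to plug $s'=0\in S'$ directly into the strict separation inequality to obtain $\langle z^*,w\rangle<0$ on $\mathrm{int}\,\hat Z_+$, and then use the cone property $tw\in\mathrm{int}\,\hat Z_+$ with $t\to 0^+$ to get $\langle z^*,s'\rangle\geq 0$ after the sign flip; (ii) the absorption property $\mathrm{int}\,\hat Z_+ + Z_+\subseteq\mathrm{int}\,\hat Z_+$ only needs $Z_+\setminus\{0\}\subseteq\hat Z_+$ (which you have) together with the general fact $\mathrm{int}\,K+K\subseteq\mathrm{int}\,K$ for a convex cone $K$, so you need not worry about whether $0\in\hat Z_+$.
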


\section{Sequential Pareto proper subdifferential of the sums of convex vector valued mappings with a composition of two convex vector valued mappings}
 In what follows $(X,\|.\|_X)$ and $(Y,\|.\|_Y)$ stand for two real reflexive
Banach spaces,  $(Z,\|.\|_Z)$ be a real normed vector space and
$(X^*,\|.\|_X*)$, $(Y^*,\|.\|_Y*), $  $(Z^*,\|.\|_Z*)$ their respective topological dual spaces. On $X\times Y$ we use the norm $\|(x, y)\| = \sqrt{\|x\|^2
+ \|y\|^2}$ for any $(x, y) \in X\times Y.$ Similarly, we define the norm
on $X^*\times Y^*$. Further, let $(x_{n})_{n\in\mathbb{N}}$ be a sequence in $X$ (resp. $(x^{*}_{n})_{n\in\mathbb{N}}$ be a sequence in $X^{*}$) and $x \in  X$ (resp. $x^{*}\in X^{*}$), we write $x_{n}\xrightarrow{\parallel.\parallel_{X}}x$ (resp. $x^{*}_{n}\xrightarrow{\parallel.\parallel_{X^{*}}}x^{*}$) if $\|x_{n}-x\|_{X}\longrightarrow 0$ (resp. $\|x^{*}_{n}-x^{*}\|_{X^{*}}\longrightarrow 0$) as $n\longmapsto+\infty.$\\
Our aim in this section is to formulate in the absence of constraint qualifications, a formula for the Henig proper subdifferential of the convex mapping $\partial^{p}(\displaystyle\sum_{i=1}^{m}f_{i}+g\circ h)(\overline{x}), $ where $f_{i}:X \longrightarrow \overline{Z}$ $(i=1,\hdots,m)$ be
a proper and $Z_+$-convex mappings, $h:X \longrightarrow \overline{Y} $ be a proper and $Y_+$-convex mapping, and  $g:Y
\longrightarrow \overline{Z}$ be a proper, $Z_+$-convex
and $(Z_{+},Y_{+})$-nondecreasing mapping. Let us consider the following auxiliary mappings
\[
\qquad\qquad\qquad\begin{array}{lrcl}
F_{i}\ : & X\times Y& \longrightarrow&\overline{Z}\\
&(x,y)& \longrightarrow& F_{i}(x,y):=f_{i}(x) ,
\end{array}\quad(i=1,\hdots,m)
\]
\[
\begin{array}{lrcl}
G\ : & X\times Y& \longrightarrow&\overline{Z}\\
&(x,y)& \longrightarrow& G(x,y):=g(y),
\end{array}
\]
\[
\qquad\begin{array}{lrcl}
H\ : & X\times Y& \longrightarrow&\overline{Z}\\
&(x,y)& \longrightarrow& H(x,y):=\delta^{v}_{\mathrm{epi}h}(x,y) .
\end{array}
\]
\begin{lemma}\label{lemma1} Let $z^{*}\in (Z_{+}^{*})^{\circ}$ and $(x^{*},y^{*},s)\in X^{*}\times Y^{*}\times \mathbb{R}.$ Then, we have
\begin{enumerate}
  \item $(x^{*},y^{*},s)\in\mathrm{epi}(z^{*}\circ F_{i})^{*}\Longleftrightarrow (x^{*},s)\in \mathrm{epi}(z^{*}\circ f_{i})^{*}\;\text{and}\; y^{*}=0, \quad (i=1,\hdots,m).$
 \item $(x^{*},y^{*},s)\in\mathrm{epi}(z^{*}\circ G)^{*}\Longleftrightarrow x^{*}=0\;\text{and}\;(y^{*},s)\in \mathrm{epi}(z^{*}\circ g)^{*}.$
 \item  $(x^{*},y^{*},s)\in\mathrm{epi}(z^{*}\circ H)^{*}\Longleftrightarrow\;(x^{*},s)\in \mathrm{epi}(-y^{*}\circ h)^{*}
     \text{and}\; -y^{*}\in Y_{+}^{*}.$
  \end{enumerate}
\end{lemma}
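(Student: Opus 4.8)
The plan is to compute directly the Fenchel conjugates of $z^{*}\circ F_{i}$, $z^{*}\circ G$ and $z^{*}\circ H$ and then read off the three epigraph identities by comparing with the real number $s$.

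First, for item (1), I would use that $F_{i}(x,y)=f_{i}(x)$ is independent of $y$, so that $(z^{*}\circ F_{i})(x,y)=(z^{*}\circ f_{i})(x)$ on $X\times Y$ (recalling $\mathrm{dom}(z^{*}\circ f_{i})=\mathrm{dom}f_{i}$). Separating the supremum in the two variables gives
\[
(z^{*}\circ F_{i})^{*}(x^{*},y^{*})=\sup_{x\in X}\big\{\langle x^{*},x\rangle-(z^{*}\circ f_{i})(x)\big\}+\sup_{y\in Y}\langle y^{*},y\rangle,
\]
which equals $(z^{*}\circ f_{i})^{*}(x^{*})$ when $y^{*}=0$ and is $+\infty$ otherwise. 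Hence $(x^{*},y^{*},s)\in\mathrm{epi}(z^{*}\circ F_{i})^{*}$ is equivalent to $y^{*}=0$ together with $(z^{*}\circ f_{i})^{*}(x^{*})\le s$, i.e. $(x^{*},s)\in\mathrm{epi}(z^{*}\circ f_{i})^{*}$. Item (2) is proved in exactly the same way after exchanging the roles of $x$ and $y$, since $G(x,y)=g(y)$.

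The only point requiring a little care is item (3). Here I would first observe that, since $z^{*}\in(Z_{+}^{*})^{\circ}\subseteq Z_{+}^{*}$, the convention $\langle z^{*},+\infty_{Z}\rangle=+\infty$ gives $z^{*}\circ\delta^{v}_{\mathrm{epi}h}=\delta_{\mathrm{epi}h}$, the scalar indicator of $\mathrm{epi}h$, so that $(z^{*}\circ H)^{*}$ is the support function of $\mathrm{epi}h$. Writing a generic element of $\mathrm{epi}h=\{(x,y):x\in\mathrm{dom}h,\ y-h(x)\in Y_{+}\}$ in the form $(x,h(x)+k)$ with $x\in\mathrm{dom}h$ and $k\in Y_{+}$, I would split
\[
(\delta_{\mathrm{epi}h})^{*}(x^{*},y^{*})=\sup_{x\in\mathrm{dom}h}\big\{\langle x^{*},x\rangle+\langle y^{*},h(x)\rangle\big\}+\sup_{k\in Y_{+}}\langle y^{*},k\rangle .
\]
The second supremum is $0$ if $-y^{*}\in Y_{+}^{*}$ and $+\infty$ otherwise; and when $-y^{*}\in Y_{+}^{*}$, the convention $\langle -y^{*},h(x)\rangle=+\infty$ for $x\notin\mathrm{dom}h$ lets me extend the first supremum to all of $X$, turning it into $((-y^{*})\circ h)^{*}(x^{*})$. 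Thus $(z^{*}\circ H)^{*}(x^{*},y^{*})=((-y^{*})\circ h)^{*}(x^{*})$ when $-y^{*}\in Y_{+}^{*}$ and $+\infty$ otherwise, and comparing with $s$ yields precisely the claimed description of $\mathrm{epi}(z^{*}\circ H)^{*}$.

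I do not expect a genuine obstacle: the statement is essentially a bookkeeping exercise with Fenchel conjugates of partially separable functions, and the only thing to keep straight is the interplay between the conventions attached to $+\infty_{Z}$ and $+\infty_{Y}$ and the cone polarity condition $-y^{*}\in Y_{+}^{*}$ appearing in item (3).
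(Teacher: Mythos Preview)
Your proposal is correct and follows essentially the same approach as the paper: compute the three conjugates $(z^{*}\circ F_{i})^{*}$, $(z^{*}\circ G)^{*}$, $(z^{*}\circ H)^{*}$ and then compare with $s$. The paper merely asserts the formulas $(z^{*}\circ F_{i})^{*}(x^{*},y^{*})=(z^{*}\circ f_{i})^{*}(x^{*})+\delta_{\{0\}}(y^{*})$, $(z^{*}\circ G)^{*}(x^{*},y^{*})=(z^{*}\circ g)^{*}(y^{*})+\delta_{\{0\}}(x^{*})$ and $(z^{*}\circ H)^{*}(x^{*},y^{*})=(-y^{*}\circ h)^{*}(x^{*})+\delta_{Y_{+}^{*}}(-y^{*})$ as ``easy to see'' and then reads off the epigraph conditions; you supply the actual derivations (in particular the parametrization $(x,h(x)+k)$ of $\mathrm{epi}h$ and the cone-polarity argument for item~(3)), which is exactly what is needed to justify the paper's stated formulas.
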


\begin{proof}

It is easy to see that the conjugate functions associated to the functions  $z^{*}\circ F_{i}\ (i=1,\hdots,m),$ $z^{*}\circ G$ and $z^{*}\circ H$ are given for any $(x^{*},y^{*})\in X^{*}\times Y^{*},$ by
\begin{eqnarray}
  (z^{*}\circ F_{i})^{*}(x^{*},y^{*})&=&(z^{*}\circ f_{i})^{*}(x^{*})+\delta_{\{0\}}(y^{*}), \quad(i=1,\hdots,m)\label{1}\\
  (z^{*}\circ G)^{*}(x^{*},y^{*})&=&(z^{*}\circ g)^{*}(y^{*})+\delta_{\{0\}}(x^{*})\label{2}\\
  (z^{*}\circ H)^{*}(x^{*},y^{*})&=&(-y^{*}\circ h)^{*}(x^{*})+\delta_{Y_{+}^{*}}(-y^{*})\label{3}.
\end{eqnarray}

\noindent i) Let $(x^{*},y^{*},s)\in\mathrm{epi}(z^{*}\circ F_{i})^{*},$ $(i=1,\hdots,m),$ then we have
\[(z^{*}\circ F_{i})^{*}(x^{*},y^{*})\leq s\]
and by (\ref{1}) we get  \[(z^{*}\circ f_{i})^{*}(x^{*})+\delta_{\{0\}}(y^{*})\leq s,\quad (i=1,\hdots,m),\]
i.e. \[(x^{*},s)\in \mathrm{epi}(z^{*}\circ f_{i})^{*}\;\text{and}\; y^{*}=0, \quad (i=1,\hdots,m).\]
By applying the same arguments as above we obtain easily ii) and iii).
\end{proof}

Now, we state the sequential Henig proper subdifferential of the convex mapping $\displaystyle\sum_{i=1}^{m}f_{i}+g\circ h$ by means of the epigraphs of the conjugate of data
vector valued mappings.
\begin{theorem}\label{theorem1}
Let $f_{1},\hdots,f_{m}$ : $X\longrightarrow \overline{Z}$ be $m \ (m\geq 2)$
proper, $Z_{+}$-convex and strict star $Z_+$-lower semicontinuous mappings, $g$ : $Y\longrightarrow \overline{Z}$ be proper, $Z_{+}$-convex,
strict star $Z_+$-lower semicontinuous and $(Y_{+},Z_{+})$-nondecreasing mapping and $h$ : $X\longrightarrow \overline{Y}$ be proper,
$Y_{+}$-convex and $Y_{+}$-epi-closed mapping. Let $\overline{x}\in(\displaystyle
\bigcap_{i=1}^{m}\mathrm{dom}f_{i})\cap \mathrm{dom}h\cap h^{-1}(\mathrm{dom} g).$ Then, $A\in\partial^{p}\left(\displaystyle \sum_{i=1}^{m}f_{i}+g\circ h\right)(\overline{x})$ if and only if, there exist $z^{*}\in (Z_{+}^{*})^{\circ},$ $(x_{i,n}^{*},r_{i,n})\in\mathrm{epi}(z^{*}\circ f_{i})^{*},$ $(i=1,\hdots,m),$ $(y_{n}^{*},s_{n})\in\mathrm{epi}(z^{*}\circ g)^{*},$ $v_{n}^{*}\in -Y_{+}^{*}$ and $(u_{n}^{*},t_{n})\in\mathrm{epi}(-v_{n}^{*}\circ h)^{*}$ such that
\begin{numcases}{}
\displaystyle\sum_{i=1}^{m} x^{*}_{i,n}+u_{n}^{*}\underset{n\longmapsto+\infty}{\xrightarrow{\parallel.\parallel_{X^{*}}}}z^{*}\circ A\nonumber\\
y_{n}^{*}+v_{n}^{*}\underset{n\longmapsto+\infty}{\xrightarrow{\parallel.\parallel_{Y^{*}}}}0\nonumber\\
\displaystyle\sum_{i=1}^{m}r_{i,n}+s_{n}+t_{n}\underset{n\longmapsto+\infty}{\xrightarrow{\qquad}}(z^{*}\circ A)(\overline{x})-\displaystyle\sum_{i=1}^{m}(z^{*}\circ f_{i})(\overline{x})-(z^{*}\circ g)(h(\overline{x})).\nonumber
\end{numcases}
\end{theorem}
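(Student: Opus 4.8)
The plan is to reduce the vector problem to a family of scalar problems via the scalarization theorem (Theorem~\ref{scalarization}), then apply the scalar epigraph calculus (Theorem~\ref{epiofthesum}) on the product space $X\times Y$ to the auxiliary functions $F_i$, $G$, $H$. First I would rewrite the composition $g\circ h$ in product-space form: using that $g$ is $(Y_+,Z_+)$-nondecreasing, one has the standard identity
\[
\Bigl(\sum_{i=1}^m f_i + g\circ h\Bigr)(x)=\inf_{y\in Y}\Bigl(\sum_{i=1}^m F_i + G + H\Bigr)(x,y),
\]
so that $A\in\partial^p\bigl(\sum_i f_i+g\circ h\bigr)(\overline x)$ is equivalent, via Theorem~\ref{scalarization}, to the existence of $z^*\in(Z_+^*)^\circ$ with $z^*\circ A\in\partial\bigl(\sum_i (z^*\circ f_i)+(z^*\circ g)\circ h\bigr)(\overline x)$. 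The hypotheses (strict star lower semicontinuity of the $f_i$ and $g$, $Y_+$-epi-closedness of $h$, monotonicity of $g$) are exactly what is needed so that $z^*\circ F_i$, $z^*\circ G$, $z^*\circ H$ are proper, convex and lower semicontinuous on $X\times Y$ for every $z^*\in(Z_+^*)^\circ$; this is where those assumptions get used.

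Next I would translate the scalar subdifferential membership into an epigraph statement. Since $z^*\circ A\in\partial\phi(\overline x)$ for $\phi:=\sum_i(z^*\circ f_i)+(z^*\circ g)\circ h$ is equivalent to $\bigl(z^*\circ A,\langle z^*\circ A,\overline x\rangle-\phi(\overline x)\bigr)\in\operatorname{epi}\phi^*$, and since $\phi(x)=\inf_y\bigl(\sum_i(z^*\circ F_i)+(z^*\circ G)+(z^*\circ H)\bigr)(x,y)$ is a marginal function, a routine conjugacy computation gives
\[
\operatorname{epi}\phi^*=\bigl\{(x^*,s):(x^*,0,s)\in\operatorname{epi}\bigl(\textstyle\sum_i(z^*\circ F_i)+(z^*\circ G)+(z^*\circ H)\bigr)^*\bigr\}.
\]
Applying Theorem~\ref{epiofthesum} to the $m+2$ functions $z^*\circ F_1,\dots,z^*\circ F_m,\ z^*\circ G,\ z^*\circ H$ on $X\times Y$ (whose domains intersect at $(\overline x,h(\overline x))$ by the assumption on $\overline x$) yields that this epigraph is the weak$^*$-closure of $\sum_i\operatorname{epi}(z^*\circ F_i)^*+\operatorname{epi}(z^*\circ G)^*+\operatorname{epi}(z^*\circ H)^*$. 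Because $X$ and $Y$ are reflexive, the weak$^*$ topology on $X^*\times Y^*\times\mathbb R$ is metrizable on bounded sets and, more to the point, closure can be described by \emph{sequences} rather than nets — this is the role of the reflexivity hypothesis and is what lets the final statement be phrased with $\xrightarrow{\|\cdot\|}$-type limits (in fact the theorem even gets norm-convergence in the first two components, which I would extract by the usual trick of noting the closure of a sum of epigraphs in the product of the weak topology and $\tau_{\mathbb R}$ coincides, on reflexive spaces, with the appropriate sequential closure). Finally, I would unwind the three components of each summand using Lemma~\ref{lemma1}: an element of $\operatorname{epi}(z^*\circ F_i)^*$ has the form $(x^*_{i,n},0,r_{i,n})$ with $(x^*_{i,n},r_{i,n})\in\operatorname{epi}(z^*\circ f_i)^*$; an element of $\operatorname{epi}(z^*\circ G)^*$ has the form $(0,y^*_n,s_n)$ with $(y^*_n,s_n)\in\operatorname{epi}(z^*\circ g)^*$; and an element of $\operatorname{epi}(z^*\circ H)^*$ has the form $(u^*_n,v^*_n,t_n)$ with $-v^*_n\in Y_+^*$ and $(u^*_n,t_n)\in\operatorname{epi}(-v^*_n\circ h)^*$. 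Summing the first components over $X^*$ gives $\sum_i x^*_{i,n}+u^*_n\to z^*\circ A$, over $Y^*$ gives $y^*_n+v^*_n\to 0$ (the ``$0$'' being forced by the marginal-function reduction), and the real components give the third convergence after substituting the value of $\langle z^*\circ A,\overline x\rangle-\phi(\overline x)$. The converse direction is the easy one: given sequences satisfying the three limits, reversing each of the above steps — in particular using that a limit of points in $\operatorname{epi}\psi^*$ that converges in $X^*\times\mathbb R$ lies in $\operatorname{cl}\operatorname{epi}\psi^*=\operatorname{epi}\psi^*$ by lower semicontinuity — recovers $z^*\circ A\in\partial\phi(\overline x)$, hence $A\in\partial^p\bigl(\sum_i f_i+g\circ h\bigr)(\overline x)$ via Theorem~\ref{scalarization}.

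The main obstacle I anticipate is \emph{not} the algebra of conjugates (Lemma~\ref{lemma1} already does the bookkeeping) but rather the passage from the topological closure in Theorem~\ref{epiofthesum} to a clean \emph{sequential} statement with the norm-convergences as written: one must verify carefully that on the reflexive spaces $X^*$, $Y^*$ the relevant closure of the sum of epigraphs is sequential, and that the convergences in the first two slots can be upgraded to norm convergence (this typically follows because the ``error'' one adds to reach the closure can be taken from a single epigraph and made to go to zero in norm, after absorbing it into one of the summands). A secondary delicate point is making the marginal-function identity $\operatorname{epi}\phi^*=\{(x^*,s):(x^*,0,s)\in\operatorname{epi}(\cdots)^*\}$ rigorous, i.e.\ checking that the infimum over $y$ is exact/attained in the relevant sense or at least that no closure is lost when projecting — here the $Y_+$-epi-closedness of $h$ and lower semicontinuity of $g$ do the work. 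Once these two points are handled, the rest is a direct assembly of the three cited theorems and the lemma.
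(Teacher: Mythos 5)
Your proposal is correct and follows essentially the same route as the paper: scalarization via Theorem~\ref{scalarization}, lifting to the product space through $F_i$, $G$, $H$ and the monotonicity of $g$, the epigraph sum rule of Theorem~\ref{epiofthesum}, and Lemma~\ref{lemma1} to decompose the summands. The one point you flag as delicate — upgrading the $w^*\times\tau_{\mathbb{R}}$-closure to a norm-sequential one — is settled in the paper simply by noting that the sum of epigraphs is convex, so in a reflexive space its weak$^*$ (= weak) closure coincides with its norm closure.
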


\begin{proof}
Let $A\in\partial^{p}\left(\displaystyle \sum_{i=1}^{m}f_{i}+g\circ h\right)(\overline{x}).$ According to scalarization Theorem \ref{scalarization}, there exists $z^{*}\in (Z_{+}^{*})^{\circ}$ such that
\begin{equation}\label{Eq1}
  z^{*}\circ A\in\partial\left(\displaystyle \sum_{i=1}^{m}z^{*}\circ f_{i}+z^{*}\circ g\circ h\right)(\overline{x}).
\end{equation}
By introducing the scalar indicator function $\delta^{}_{\mathrm{epi}h}$ and by adopting the convention $z^{*}(+\infty_{Z})=+\infty,$ it easy to check that $z^{*}\circ\delta^{v}_{\mathrm{epi}h}=\delta^{}_{\mathrm{epi}h}$ and  by using the monotonicity of the mapping $g$ it follows that (\ref{Eq1}) becomes equivalent to
\[(z^{*}\circ A,0)\in\partial\left(\displaystyle \sum_{i=1}^{m}z^{*}\circ F_{i}+ z^{*}\circ G+z^{*}\circ H\right)(\overline{x},h(\overline{x}))\]
i.e.
\begin{multline*}\left(\displaystyle \sum_{i=1}^{m}z^{*}\circ F_{i}+ z^{*}\circ G+z^{*}\circ H\right)^{*}(z^{*}\circ A,0)+\left(\displaystyle \sum_{i=1}^{m}z^{*}\circ F_{i}+ z^{*}\circ G+z^{*}\circ H\right)(\overline{x},h(\overline{x}))\\=\langle(z^{*}\circ A,0),(\overline{x},h(\overline{x}))\rangle=\langle z^{*}\circ A ,\overline{x}\rangle
\end{multline*}
and hence we get
\begin{multline}\label{4}
\left((z^{*}\circ A,0),\langle z^{*}\circ A,\overline{x}\rangle-\left(\displaystyle \sum_{i=1}^{m}z^{*}\circ F_{i}+ z^{*}\circ G+z^{*}\circ H\right)(\overline{x},h(\overline{x}))\right)\\\in\mathrm{epi}\left(\displaystyle \sum_{i=1}^{m}z^{*}\circ F_{i}+ z^{*}\circ G+z^{*}\circ H\right)^{*}.
\end{multline}
It is easy to see that the mappings $ F_{i},\;(i=1,\hdots,m),$ $G$ and $H$ are proper, $Z_{+}$-convex and strict star $Z_{+}$-lower semicontinuous on $X\times Y$ and as $z^{*}$ is $Z_{+}$-nondecreasing, it follows that the scalar functions $z^{*}\circ F_{i},\; (i=1,\hdots,m),$ $z^{*}\circ G$ and $z^{*}\circ H$ are proper, convex and lower semicontinuous. Let us note that $\mathrm{dom}(z^{*}\circ F_{i})=\mathrm{dom} f_{i}\times Y,\; (i=1,\hdots,m),$
$\mathrm{dom}(z^{*}\circ G)= X\times\mathrm{dom} g$ and $\mathrm{dom}(z^{*}\circ H)=\mathrm{epi}h$ and the condition $\overline{x}\in(\displaystyle
\bigcap_{i=1}^{m}\mathrm{dom}f_{i})\cap \mathrm{dom}h\cap h^{-1}(\mathrm{dom} g)$ can be written equivalently as $(\overline{x},h(\overline{x}))\in (\displaystyle \bigcap_{i=1}^{m}\mathrm{dom}F_{i})\cap \mathrm{dom}G\cap \mathrm{dom}H.$ Thus, the functions $z^{*}\circ F_{i}\; (i=1,\hdots,m),$ $z^{*}\circ G$ and $z^{*}\circ H,$ satisfy together all the  assumptions of Theorem \ref{epiofthesum} and hence it follows from (\ref{4}) that
\begin{multline*}
\left((z^{*}\circ A,0),\langle z^{*}\circ A,\overline{x}\rangle-\left(\displaystyle \sum_{i=1}^{m}z^{*}\circ F_{i}+ z^{*}\circ G+z^{*}\circ H\right)(\overline{x},h(\overline{x}))\right)\\\in \mathrm{cl}_{w^{*}\times\tau_{\mathbb{R}}}\left(\displaystyle \sum_{i=1}^{m}\mathrm{epi}(z^{*}\circ F_{i})^{*}+ \mathrm{epi}(z^{*}\circ G)^{*}+\mathrm{epi}(z^{*}\circ H)^{*}\right)\\= \mathrm{cl}_{\|.\|_{X^{*}\times Y^{*}\times\tau_{\mathbb{R}}}}\left(\displaystyle \sum_{i=1}^{m}\mathrm{epi}(z^{*}\circ F_{i})^{*}+ \mathrm{epi}(z^{*}\circ G)^{*}+\mathrm{epi}(z^{*}\circ H)^{*}\right)
\end{multline*}
and therefore there exist $\left((x^{*}_{i,n},y^{*}_{i,n}),r_{i,n}\right),\; (i=1,\hdots,m),$ $\left((x^{*}_{n},y^{*}_{n}),s_{n}\right)$ and $\left((u^{*}_{n},v^{*}_{n}),t_{n}\right)\in X^{*}\times Y^{*}\times\mathbb{R},$ satisfying
\begin{equation}
\left.
    \begin{array}{ll}
    \left((x^{*}_{i,n},y^{*}_{i,n}),r_{i,n}\right)\in\mathrm{epi}(z^{*}\circ F_{i})^{*},\; (i=1,\hdots,m)\\
    \\[0.25mm]\left((x^{*}_{n},y^{*}_{n}),s_{n}\right)\in\mathrm{epi}(z^{*}\circ G)^{*} \\
     \\[0.25mm]
     \noindent\left((u^{*}_{n},v^{*}_{n}),t_{n}\right)\in\mathrm{epi}(z^{*}\circ H)^{*}
\end{array}
\right\}\label{6}
\end{equation}
\noindent such that
\begin{multline}\label{5}
\displaystyle \sum_{i=1}^{m}\left((x^{*}_{i,n},y^{*}_{i,n}),r_{i,n}\right)
+\left((x^{*}_{n},y^{*}_{n}),s_{n}\right)+\left((u^{*}_{n},v^{*}_{n}),t_{n}\right)\\
\xrightarrow{{\parallel.\parallel_{X^{*}\times Y^{*}}}}\left((z^{*}\circ A,0),\langle z^{*}\circ A,\overline{x}\rangle-\left(\displaystyle \sum_{i=1}^{m}z^{*}\circ F_{i}+ z^{*}\circ G+z^{*}\circ H\right)(\overline{x},h(\overline{x}))\right).
\end{multline}
By applying Lemma \ref{lemma1}, (\ref{6}) may be rewritten as
\begin{numcases}{}
\left((x^{*}_{i,n},y^{*}_{i,n}),r_{i,n}\right)\in\mathrm{epi}(z^{*}\circ F_{i})^{*}\Longleftrightarrow (x^{*}_{i,n},r_{i,n})\in \mathrm{epi}(z^{*}\circ f_{i})^{*}\;\text{and}\; y^{*}_{i,n}=0,\nonumber\\\nonumber
\\[0.25mm]
\left((x^{*}_{n},y^{*}_{n}),s_{n}\right)\in\mathrm{epi}(z^{*}\circ G)^{*}\Longleftrightarrow x^{*}_{n}=0\;\text{and}\;(y^{*}_{n},s_{n})\in \mathrm{epi}(z^{*}\circ g)^{*}\nonumber\\ \nonumber
\\[0.25mm]
\left((u^{*}_{n},v^{*}_{n}),t_{n}\right)\in\mathrm{epi}(z^{*}\circ H)^{*}\Longleftrightarrow (u^{*}_{n},t_{n})\in \mathrm{epi}(-v_{n}^{*}\circ h)^{*}\;\text{and}\; -v_{n}^{*}\in Y_{+}^{*}.\nonumber
\end{numcases}

Since  $x^{*}_{n}=0,$ $y^{*}_{i,n}=0,$ $(i=1,\hdots,m)$ and  $(\overline{x},h(\overline{x}))\in \mathrm{epi}h,$ then the expression  (\ref{5}) becomes equivalent to

\begin{numcases}{}
\displaystyle \sum_{i=1}^{m}x^{*}_{i,n}+u^{*}_{n} \xrightarrow{{\parallel.\parallel_{X^{*}}}}z^{*}\circ A\nonumber\\
y^{*}_{n}+v^{*}_{n}\xrightarrow{{\parallel.\parallel_{ Y^{*}}}}0\nonumber\\
\displaystyle \sum_{i=1}^{m}r_{i,n}+s_{n}+t_{n}\longrightarrow\langle z^{*}\circ A,\overline{x}\rangle-\displaystyle \sum_{i=1}^{m}z^{*}\circ f_{i}(\overline{x})- z^{*}\circ g(h(\overline{x})).\nonumber
\end{numcases}
The proof is complete.
\end{proof}
\section{Sequential proper efficiency optimality conditions}
In this section, we are concerned with the general multiobjective fractional programming problem
\begin{equation*}
\mathrm{(P)}\; \inf_{\substack{{x}\in C\\h(x)\in -Y_{+}}}\left\{\left(\frac{f_{1}(x)}{g_{1}(x)},\hdots,\frac{f_{m}(x)}{g_{m}(x)}\right)\right\},
\end{equation*}

\noindent with $C\subset X$ be a nonempty, closed and convex subset and $Y_{+}\subset Y$ be a nonempty closed convex cone. The functions $f_{i},$ {-}$g_{i}$ : $X\longrightarrow\mathbb{R},$ $ (i=1,\hdots,m)$ are convex, lower semicontinuous and $h$ : $X\longrightarrow \overline{Y}$ is a proper, $Y_{+}$-convex and $Y_{+}$-epi-closed mapping. We assume that $f_{i}(x)\geq 0,\;g_{i}(x)>0 $ $\ (i=1,\hdots,m).$  Let $e_i$ denote the ith unit coordinate vector and $e$ the vector of ones in $\mathbb{R}^m$ . For $\epsilon \geq 0$, the positive  hull of the subset $S:=\{e_i + \epsilon e : i = 1,\cdots, m\}$ is defined by
\[K_{\epsilon}:= \{\displaystyle \sum_{i=1}^{m}\alpha_i(e_i + \epsilon e) : \; \alpha_i \geq 0 \;  \}.\]
In fact $K_{\epsilon}$ is a convex cone and the origin belongs to $K_{\epsilon}$. The positive polar cone of $K_{\epsilon}$ is denoted by
\[ K_{\epsilon}^*:=\{v\in \mathbb{R}^m: \; \langle v, y\rangle \geqq 0, \quad \forall y\in K_{\epsilon}\}.\]
It is easy to see that
\[K_{\epsilon} \backslash\{0\}\subset  \mathrm{int}(\mathbb{R}_+^m)\subset\mathbb{R}_+^m\backslash\{0\}\subset \mathrm{int}(K_{\epsilon}^*).\]
We endow the finite-dimensional space $Z:=\mathbb{R}^{m}$ with
its natural order induced by the nonnegative orthant $Z_{+}:=\mathbb{R}^{m}_{+},$ and we shall use the following characterization of proper efficiency (see Luc-Soleimani-damaneh \cite{luc}).
\begin{proposition}\label{l60} A point
$\overline{x}\in C\cap h^{-1}(-Y_{+})$ is properly efficient solution of the problem $(P)$ if and only if there exists some $\epsilon>0$ and there is no $x\in C\cap h^{-1}(-Y_{+})$ such that
\[\left(\frac{f_{1}(x)}{g_{1}(x)}-\frac{f_{1}(\overline{x})}{g_{1}(\overline{x})},\hdots,\frac{f_{m}(x)}{g_{m}(x)}-\frac{f_{m}(\overline{x})}{g_{m}(\overline{x})}\right)\in -K_{\epsilon}^{*},\]
\end{proposition}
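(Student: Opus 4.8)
The plan is to note that Proposition~\ref{l60} is a statement about Henig proper efficiency of the (generally nonconvex) map $\varphi(\cdot):=\bigl(\tfrac{f_{1}(\cdot)}{g_{1}(\cdot)},\hdots,\tfrac{f_{m}(\cdot)}{g_{m}(\cdot)}\bigr)$ over $F:=C\cap h^{-1}(-Y_{+})$ in the ordered space $Z=\mathbb{R}^{m}$, $Z_{+}=\mathbb{R}^{m}_{+}$; no convexity or semicontinuity of the data enters, only the geometry of the cones $K_{\epsilon}^{*}$. By Definition~\ref{der}, $\overline{x}\in E^{p}(\varphi,F)$ means exactly that there is a convex cone $\hat{Z}_{+}$ with $\mathbb{R}^{m}_{+}\setminus\{0\}\subset\mathrm{int}\hat{Z}_{+}$ and no $x\in F$ with $\varphi(\overline{x})-\varphi(x)\in\hat{Z}_{+}\setminus\{0\}$; and, reading the membership $\varphi(x)-\varphi(\overline{x})\in-K_{\epsilon}^{*}$ in the proposition (as in Definition~\ref{der}) for nonzero differences, it is equivalent to $\varphi(\overline{x})-\varphi(x)\in K_{\epsilon}^{*}\setminus\{0\}$. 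So the proposition reduces to the assertion that a dilating cone $\hat{Z}_{+}$ with the stated property exists if and only if one may choose $\hat{Z}_{+}=K_{\epsilon}^{*}$ for some $\epsilon>0$.

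For sufficiency I would just take $\hat{Z}_{+}:=K_{\epsilon}^{*}$ for the $\epsilon>0$ of the hypothesis: it is a closed convex cone, it is a proper subcone of $\mathbb{R}^{m}$ (the polar of the solid cone $K_{\epsilon}$, whose generators $e_{i}+\epsilon e$ are independent since $\det(I+\epsilon J)=1+m\epsilon\neq0$), and $\mathbb{R}^{m}_{+}\setminus\{0\}\subset\mathrm{int}K_{\epsilon}^{*}$ by the inclusion chain displayed before the statement; hence $K_{\epsilon}^{*}$ is admissible in Definition~\ref{der}, and the hypothesis that no $x\in F$ has $\varphi(x)-\varphi(\overline{x})\in-K_{\epsilon}^{*}\setminus\{0\}$ is precisely its nonexistence clause, giving $\overline{x}\in E^{p}(\varphi,F)$.

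For necessity I would first record two facts about the family $(K_{\epsilon}^{*})_{\epsilon>0}$: a vector $v$ lies in $K_{\epsilon}^{*}$ iff $v_{i}+\epsilon\langle e,v\rangle\geq0$ for all $i$; summing over $i$ gives $(1+m\epsilon)\langle e,v\rangle\geq0$, hence $\langle e,v\rangle\geq0$, whence $v_{i}+\epsilon'\langle e,v\rangle\geq0$ for all $\epsilon'\geq\epsilon$, so $K_{\epsilon}^{*}\subseteq K_{\epsilon'}^{*}$ for $0<\epsilon\leq\epsilon'$; and letting $\epsilon\downarrow0$ in these inequalities yields $\bigcap_{\epsilon>0}K_{\epsilon}^{*}=\mathbb{R}^{m}_{+}$. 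Now let $\hat{Z}_{+}$ be the cone from Definition~\ref{der} for the properly efficient $\overline{x}$, and let $\Sigma$ be the unit sphere of $\mathbb{R}^{m}$. The sets $K_{\epsilon}^{*}\cap\Sigma$ are compact, nonincreasing as $\epsilon\downarrow0$, with $\bigcap_{\epsilon>0}(K_{\epsilon}^{*}\cap\Sigma)=\mathbb{R}^{m}_{+}\cap\Sigma\subset\mathrm{int}\hat{Z}_{+}$; hence the nested compact sets $K_{\epsilon}^{*}\cap\Sigma\setminus\mathrm{int}\hat{Z}_{+}$ have empty intersection, so by the finite intersection property one of them is empty, i.e.\ $K_{\epsilon_{0}}^{*}\cap\Sigma\subset\mathrm{int}\hat{Z}_{+}$ for some $\epsilon_{0}>0$. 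Since $K_{\epsilon_{0}}^{*}$ and $\hat{Z}_{+}$ are cones, this forces $K_{\epsilon_{0}}^{*}\subseteq\hat{Z}_{+}$, so $K_{\epsilon_{0}}^{*}\setminus\{0\}\subseteq\hat{Z}_{+}\setminus\{0\}$, and therefore the nonexistence clause of Definition~\ref{der} for $\hat{Z}_{+}$ implies the one for $K_{\epsilon_{0}}^{*}$, which is exactly the conclusion of the proposition with $\epsilon:=\epsilon_{0}$.

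The bookkeeping between Definition~\ref{der}, the relation $\lneqq$, and the cones $\pm K_{\epsilon}^{*}$ is routine. The one substantive step, and the main obstacle, is the compactness argument in the necessity direction producing a single $\epsilon_{0}$ with $K_{\epsilon_{0}}^{*}\subseteq\hat{Z}_{+}$; it rests on the monotonicity $\epsilon\mapsto K_{\epsilon}^{*}$ and on $\bigcap_{\epsilon>0}K_{\epsilon}^{*}=\mathbb{R}^{m}_{+}$, which convert the openness information $\mathbb{R}^{m}_{+}\setminus\{0\}\subset\mathrm{int}\hat{Z}_{+}$ into a genuine inclusion of the thin cone $K_{\epsilon_{0}}^{*}$.
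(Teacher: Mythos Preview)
The paper does not prove Proposition~\ref{l60}; it quotes the characterization from Luc--Soleimani-damaneh~\cite{luc} and uses it as a black box. So there is no in-paper proof to compare against, and your argument stands on its own.

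Your proof is correct. The sufficiency direction is immediate once you take $\hat{Z}_{+}:=K_{\epsilon}^{*}$ and invoke the inclusion $\mathbb{R}^{m}_{+}\setminus\{0\}\subset\mathrm{int}K_{\epsilon}^{*}$ already displayed in the paper. For necessity, the two facts you isolate---that $\epsilon\mapsto K_{\epsilon}^{*}$ is monotone (since $v\in K_{\epsilon}^{*}$ forces $\langle e,v\rangle\geq 0$) and that $\bigcap_{\epsilon>0}K_{\epsilon}^{*}=\mathbb{R}^{m}_{+}$---are exactly what is needed, and the finite-intersection-property argument on the compact slices $K_{\epsilon}^{*}\cap\Sigma$ is the standard and cleanest way to extract a single $\epsilon_{0}$ with $K_{\epsilon_{0}}^{*}\subseteq\hat{Z}_{+}$ from the openness hypothesis $\mathbb{R}^{m}_{+}\setminus\{0\}\subset\mathrm{int}\hat{Z}_{+}$.

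One remark worth keeping in your write-up: you correctly flag that, as literally stated, the proposition's condition ``no $x$ with $\varphi(x)-\varphi(\overline{x})\in-K_{\epsilon}^{*}$'' is vacuously false because $0\in-K_{\epsilon}^{*}$ (take $x=\overline{x}$). Your reading ``for nonzero differences'' is the intended one, matching Definition~\ref{der} (which uses $\lneqq_{\hat{Z}_{+}}$, i.e.\ membership in $\hat{Z}_{+}\setminus\{0\}$) and the way the paper actually deploys the proposition in the proof of Lemma~\ref{l61}.
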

We associate to problem (P) the multiobjective convex minimization problem
\[({P}_{\overline{x}})\quad \inf_{x\in C\cap h^{-1}(-Y_{+})}\left\{\left(f_{1}(x)-v_{1}g_{1}({x}),\hdots,f_{m}(x)-v_{m}g_{m}({x})\right)\right\},\]
\noindent where $\overline{x}\in C\cap h^{-1}(-Y_{+})$ and $\nu_{i}:=\frac{f_{i}(\overline{x})}{g_{i}(\overline{x})} \; (i=1,\hdots,m)$. The problem (${P}_{\overline{x}}$) is intimately related to $(P).$ The crucial relationship between $(P)$ and (${P}_{\overline{x}}$), which will serve our purposes, is stated in the following lemma
\begin{lemma}\label{l61}
A point $\overline{x}\in C\cap h^{-1}(-Y_{+})$ is  Henig properly efficient solution for problem (P) if and only if, $\overline{x}$ is  Henig properly efficient solution for problem $(P_{\overline{{x}}}).$
\end{lemma}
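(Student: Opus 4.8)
The plan is to pass a positive diagonal scaling through the characterization of Henig proper efficiency recalled in Proposition~\ref{l60}. Both problems have the same feasible set $S:=C\cap h^{-1}(-Y_+)$, and, by the very definition $v_i=f_i(\overline x)/g_i(\overline x)$, the objective of $(P_{\overline x})$ vanishes at $\overline x$: $f_i(\overline x)-v_ig_i(\overline x)=0$ for every $i$. Put $a(x):=\big(\tfrac{f_i(x)}{g_i(x)}-v_i\big)_{i=1}^m$ and $b(x):=\big(f_i(x)-v_ig_i(x)\big)_{i=1}^m$; these are exactly the vectors of differences, from their respective values at $\overline x$, of the objective of $(P)$ and of the objective of $(P_{\overline x})$, and they are linked by
\[
b(x)=D_x\,a(x),\qquad D_x:=\mathrm{diag}\big(g_1(x),\dots,g_m(x)\big),
\]
a positive diagonal operator on $\mathbb R^m$ since $g_i(x)>0$. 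I would first record the elementary fact that a positive diagonal operator $D$ on $\mathbb R^m$ preserves $\le_{\mathbb R^m_+}$ and $\lneqq_{\mathbb R^m_+}$, fixes $0$, and maps $\mathrm{int}\,\mathbb R^m_+$ onto itself; hence for any dilating cone $\hat Z_+$ (meaning $\mathbb R^m_+\setminus\{0\}\subset\mathrm{int}\,\hat Z_+$), the set $D^{-1}\hat Z_+$ is again a convex cone with $\mathbb R^m_+\setminus\{0\}\subset\mathrm{int}\,(D^{-1}\hat Z_+)$.

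By Proposition~\ref{l60}, $\overline x$ is Henig properly efficient for $(P)$ if and only if there is $\epsilon>0$ such that no $x\in S$ satisfies $a(x)\in -K^*_\epsilon\setminus\{0\}$; applying the same characterization of proper efficiency to the convex multiobjective problem $(P_{\overline x})$, whose value at $\overline x$ is $0$, $\overline x$ is Henig properly efficient for $(P_{\overline x})$ if and only if there is $\epsilon'>0$ such that no $x\in S$ satisfies $b(x)\in -K^*_{\epsilon'}\setminus\{0\}$. It therefore suffices to prove that these two conditions are equivalent. Unwinding the definition, $v\in -K^*_\epsilon$ reads $v_i+\epsilon\sum_j v_j\le 0$ for all $i$, so $-K^*_\epsilon$ is increasing in $\epsilon$ on nonzero vectors. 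Arguing by contraposition, if $\overline x$ is not Henig properly efficient for $(P)$ then for each $\epsilon>0$ there is $x_\epsilon\in S$ with $a(x_\epsilon)\ne 0$ and $a_i(x_\epsilon)+\epsilon\sum_j a_j(x_\epsilon)\le 0$ for all $i$; expanding $b_i(x_\epsilon)+\epsilon'\sum_j b_j(x_\epsilon)=g_i(x_\epsilon)a_i(x_\epsilon)+\epsilon'\sum_j g_j(x_\epsilon)a_j(x_\epsilon)$, separating the positive and negative components of $a(x_\epsilon)$, and using the bounds on the $g_i$ over $S$, one checks that this quantity is nonpositive once $\epsilon$ is taken small enough relative to $\epsilon'$ (of the order of $\epsilon'\cdot\inf_S g_i/\sup_S g_i$); this gives $b(x_\epsilon)\in -K^*_{\epsilon'}\setminus\{0\}$, so $\overline x$ is not Henig properly efficient for $(P_{\overline x})$. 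The reverse implication is symmetric, since $a=D_x^{-1}b$ and $D_x^{-1}$ is again positive and diagonal.

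The only genuinely delicate point is that $D_x$ varies with $x$: a dilating cone (equivalently, a value of $\epsilon$) admissible for one problem need not be admissible for the other, so the scaling has to be absorbed into the choice of parameter, and this requires uniform control of the diagonal entries over $S$, namely $0<\inf_S g_i$ and $\sup_S g_i<\infty$ (it is already enough that the ratios $g_i/g_j$ stay bounded on $S$). This control is exactly what the positivity and regularity hypotheses on the $g_i$ provide; in the extreme case where each $g_i$ is positive throughout $X$ it is in fact forced to be a positive constant — a finite concave function that is everywhere positive is constant on every line, hence constant — so that $D_x\equiv D$ is fixed and the equivalence is immediate from the first two paragraphs. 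Once the scaling has been absorbed, both implications close and Lemma~\ref{l61} is established.
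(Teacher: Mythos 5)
Your reduction is the same as the paper's in spirit: both problems share the feasible set $S=C\cap h^{-1}(-Y_+)$, the two difference vectors are related by the positive diagonal scaling $b(x)=D_x a(x)$, and the equivalence is pushed through the characterization of Proposition~\ref{l60} by contraposition. You have also correctly isolated the one genuinely delicate point, namely that $D_x$ depends on the witness $x$. But your resolution of that point is where the proof breaks. The quantifiers force you to choose $\epsilon$ (hence the witness $x_\epsilon$) before you know the values $g_i(x_\epsilon)$, so closing the argument the way you propose requires the ratios $g_i/g_j$ to be bounded on all of $S$; this is not among the paper's hypotheses and does not follow from them: $S$ may be unbounded and the $g_i$, being finite concave functions required to be positive only on the feasible set, can grow linearly there (e.g.\ $g_1\equiv 1$ and $g_2(x)=x$ on $S=[1,+\infty)$). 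Your fallback --- that a finite concave function positive on all of $X$ must be constant --- is a correct fact, but it rests on a reading of the hypotheses the paper does not intend: in the paper's own example $g_1(x,y)=y+3$ is positive only on $C$, not on $\mathbb{R}^2$, and is not constant; and under that reading every denominator would be a constant and the lemma would be vacuous. So the ``absorption of the scaling into the parameter'' is asserted rather than established, and the proof as written is incomplete.

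For what it is worth, the paper's own proof has the same soft spot: it ``substitutes'' $\alpha_i\mapsto \alpha_i/g_i(x_0)$ and $\alpha_j\mapsto \alpha_j/g_i(x_0)$ term by term inside the single inequality $\sum_{i}\left(f_i(x_0)-\nu_i g_i(x_0)\right)\left(\alpha_i+\epsilon\sum_j\alpha_j\right)\le 0$, which is not a legitimate change of the single test vector $\alpha\in\mathbb{R}^m_+$ unless all the $g_i(x_0)$ coincide; in effect it keeps the same $\epsilon$ on both sides, which your own analysis shows cannot be done in general. So you have put your finger on a real weakness in the lemma's justification, but your patch (uniform bounds on the $g_i$, or on their ratios, over $S$) is an additional hypothesis rather than a consequence of the stated ones; to finish the proof one must either add such a hypothesis or exploit the convexity of $f_i-\nu_i g_i$ and the concavity of $g_i$ more finely to control $g_i(x_\epsilon)/g_j(x_\epsilon)$ along the witnesses.
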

\begin{proof} ($\Longrightarrow$). Suppose in the contrary that $\overline{x}$ is not Henig proper efficient solution for $(P_{\overline{x}})$, then  it follows from Proposition \ref{l60} that for any $\epsilon>0,$ there exists some $x_{0}\in C\cap h^{-1}(-Y_{+})$ such that

\[\left(f_{1}(x_{0})-\nu_{1}g_{1}(x_{0}),\hdots,f_{i}(x_{0})-\nu_{i}g_{i}(x_{0})\right)\in-K_{\epsilon}^{*},\]
i.e.
\begin{equation}\label{dn1}
\displaystyle\sum_{i=1}^{m}\left(f_{i}(x_{0})-\nu_{i}g_{i}(x_{0})\right)\left(\alpha_{i}+\epsilon\displaystyle\sum_{j=1}^{m}\alpha_{j}\right)\leq 0,\quad\forall (\alpha_{1},\hdots,\alpha_{m})\in\mathbb{R}_{+}^{m}.
\end{equation}
By using the fact that $g_{i}(x_{0})>0\; (i=1,\hdots,m)$ and
by substituting respectively  in (\ref{dn1}) $\alpha_{i}$ and  $\alpha_{j}$ by $\frac{\alpha_{i}}{g_{i}(x_{0})}$ and $\frac{\alpha_{j}}{g_{i}(x_{0})}$ $(j=1,\hdots,m)$ we obtain that for any $\epsilon>0,$ we have
 \[\displaystyle\sum_{i=1}^{m}\left(\frac{f_{i}(x_{0})}{g_{i}(x_{0})}-\nu_{i}\right)\left(\alpha_{i}+\epsilon\displaystyle\sum_{j=1}^{m}\alpha_{j}\right)\leq 0,\;\forall (\alpha_{1},\hdots,\alpha_{m})\in\mathbb{R}_{+}^{m},\]
which means for any $\epsilon>0$ \begin{equation*}
\left(\frac{f_{1}(x_{0})}{g_{1}(x_{0})}-\nu_{1},\hdots,\frac{f_{m}(x_{0})}{g_{m}(x_{0})}-\nu_{m}\right)\in -K_{\epsilon}^{*}.
\end{equation*}
According to Proposition \ref{l60} this contradicts the fact that $\overline{x}$ is Henig proper efficient solution for the problem  $(P).$

$(\Longleftarrow)$.  We proceed by contradiction. Assume that $\overline{x}$ is not Henig proper efficient solution of the problem (${P}$), then according to Proposition \ref{l60}, we have  for any $\epsilon>0,$ there exists some  $x_{0}\in C\cap h^{-1}(-Y_{+})$ such that
\begin{equation*}
\left(\frac{f_{1}(x_{0})}{g_{1}(x_{0})}-\nu_{1},\hdots,\frac{f_{m}(x_{0})}{g_{m}(x_{0})}-\nu_{m}\right)\in -K_{\epsilon}^{*}.
\end{equation*}
So, for any $(\alpha_{1},\hdots,\alpha_{m})\in\mathbb{R}_{+}^{m},$ we have
\begin{equation}\label{lc1}
  \displaystyle\sum_{i=1}^{m}\left(\frac{f_{i}(x_{0})-\nu_{i}g_{i}(x_{0})}{g_{i}(x_{0})}\right)\left(\alpha_{i}+\epsilon\displaystyle\sum_{j=1}^{m}\alpha_{j}\right)\leq 0.\end{equation}
 Since $g_{i}(x_{0})>0\; (i=1,\hdots,m),$ then by substituting respectively in (\ref{lc1}) $\alpha_{i}$ and  $\alpha_{j}$ by $\alpha_{i}g_{i}(x_{0})$ and  $\alpha_{j}g_{i}(x_{0})$ $(j=1,\hdots,m)$  we get
 that for any $\epsilon>0,$
\[\displaystyle\sum_{i=1}^{m}\left(f_{i}(x_{0})-\nu_{i}g_{i}(x_{0})\right)\left(\alpha_{i}+\epsilon\displaystyle\sum_{j=1}^{m}\alpha_{j}\right)\leq 0,\quad\forall (\alpha_{1},\hdots,\alpha_{m})\in\mathbb{R}_{+}^{m},\]
which yields that $\epsilon>0,$
\[\left(f_{1}(x_{0})-\nu_{1}g_{1}(x_{0}),\hdots,f_{i}(x_{0})-\nu_{i}g_{i}(x_{0})\right)\in-K_{\epsilon}^{*},\]
this contradicts, by virtue of Proposition \ref{l60} the fact that $\overline{x}$ is Henig proper efficient solution of the problem $(P_{\overline{x}}).$

\end{proof}

Now, by using this equivalence result in conjunction with Theorem \ref{theorem1}, we can establish the first characterization of sequential optimality conditions for problem (P) by means of the epigraphs of the conjugate of data functions.
\begin{theorem}\label{4.2}
Let $\overline{x}\in C\cap h^{-1}(-Y_{+})$ and $\nu_{i}:=\frac{f_{i}(\overline{x})}{g_{i}(\overline{x})} \; (i=1,\hdots,m).$ Then, $\overline{x}$ is a properly efficient solution of the problem (P) in the sense of Henig, if and only if, there exist $(\lambda_{1},\hdots,\lambda_{m})\in(\mathbb{R}_{+}\setminus\{0\})^{m},$ $(x^{*}_{i,n},a_{i,n})\in\mathrm{epi}(\lambda_{i}f_{i})^{*},$ $(w^{*}_{i,n},b_{i,n})\in\mathrm{epi}(\lambda_{i}\nu_{i}(-g_{i}))^{*},$ $(c^{*}_{n},d_{n})\in\mathrm{epi}\delta_{C}^{*}, \; y^{*}_{n}\in Y_+^*, \; s_{n}\in\mathbb{R_+}, \; v_{n}^{*}\in-Y_{+}^{*}$ and $(u^{*}_{n},t_{n})\in\mathrm{epi}(-v_{n}^{*}\circ h)^{*}$ such that
\begin{numcases}{}
\displaystyle\sum_{i=1}^{m}x_{i,n}^{*}+\displaystyle\sum_{i=1}^{m}w_{i,n}^{*}+c_{n}^{*}+u_{n}^{*}\xrightarrow{{\parallel.\parallel_{X^{*}}}}0\nonumber\\
y_{n}^{*}+v_{n}^{*}\xrightarrow{{\parallel.\parallel_{Y^{*}}}}0\nonumber\\
\displaystyle\sum_{i=1}^{m}a_{i,n}+\displaystyle\sum_{i=1}^{m}b_{i,n}+d_{n}+t_{n}+s_{n}\xrightarrow{\qquad}0.\nonumber
\end{numcases}
\end{theorem}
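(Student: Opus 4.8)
The plan is to derive Theorem~\ref{4.2} by combining the equivalence furnished by Lemma~\ref{l61} with the sequential subdifferential calculus rule of Theorem~\ref{theorem1}, specialized to the concrete data of problem~$(P)$. First I would observe that, by Lemma~\ref{l61} and the characterization property $\overline{x}\in E^{p}(f,X)\Longleftrightarrow 0\in\partial^{p}f(\overline{x})$ applied to the vector objective of $(P_{\overline{x}})$ together with the vector indicator mapping of the feasible set, $\overline{x}$ is a Henig properly efficient solution of $(P)$ if and only if
\[
0\in\partial^{p}\!\left(\sum_{i=1}^{m}\bigl(f_{i}+\nu_{i}(-g_{i})\bigr)\,e_{i}^{v}+\delta^{v}_{C}+g\circ h\right)(\overline{x}),
\]
where the objective is viewed as a $\mathbb{R}^{m}_{+}$-convex mapping into $Z=\mathbb{R}^{m}$, the composition $g\circ h$ encodes the cone constraint $h(x)\in-Y_{+}$ via the $(Y_{+},\mathbb{R}^{m}_{+})$-nondecreasing mapping $g=\delta^{v}_{-Y_{+}}$ (so that $g\circ h=\delta^{v}_{h^{-1}(-Y_{+})}$), and $\delta^{v}_{C}$ handles the abstract constraint $x\in C$. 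The point is that each scalar component $f_{i}+\nu_{i}(-g_{i})$ is convex and lsc by hypothesis (since $f_{i}$ and $-g_{i}$ are, and $\nu_{i}\geq 0$), the vector indicator $\delta^{v}_{C}$ is $\mathbb{R}^{m}_{+}$-convex and (as $C$ is closed) strict star lsc, and $h$ is $Y_{+}$-convex and $Y_{+}$-epi-closed, so all the standing assumptions of Theorem~\ref{theorem1} are met.

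Next I would apply Theorem~\ref{theorem1} to this sum, with the $m$ convex summands being $F_{i}:=\bigl(f_{i}+\nu_{i}(-g_{i})\bigr)\,e_{i}^{v}$ plus the extra summand $\delta^{v}_{C}$ (absorbing it as one more term, or equivalently taking the "$m$" in Theorem~\ref{theorem1} to be $m+1$), and the composition $g\circ h$ with $g=\delta^{v}_{-Y_{+}}$. The scalarization $z^{*}\in(\mathbb{R}^{m}_{+})^{*}{}^{\circ}=\mathrm{int}(\mathbb{R}^{m}_{+})$ is written $z^{*}=(\lambda_{1},\dots,\lambda_{m})$ with all $\lambda_{i}>0$, which produces the vector $(\lambda_{1},\dots,\lambda_{m})\in(\mathbb{R}_{+}\setminus\{0\})^{m}$ in the conclusion. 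I would then compute the epigraphs of the scalarized data: $z^{*}\circ F_{i}=\lambda_{i}(f_{i}+\nu_{i}(-g_{i}))=\lambda_{i}f_{i}+\lambda_{i}\nu_{i}(-g_{i})$, whose conjugate epigraph, by Theorem~\ref{epiofthesum} applied to these two functions, sits in the closure of $\mathrm{epi}(\lambda_{i}f_{i})^{*}+\mathrm{epi}(\lambda_{i}\nu_{i}(-g_{i}))^{*}$ — this is exactly where the split into $(x^{*}_{i,n},a_{i,n})\in\mathrm{epi}(\lambda_{i}f_{i})^{*}$ and $(w^{*}_{i,n},b_{i,n})\in\mathrm{epi}(\lambda_{i}\nu_{i}(-g_{i}))^{*}$ comes from. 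Likewise $z^{*}\circ\delta^{v}_{C}=\delta_{C}$, giving $(c^{*}_{n},d_{n})\in\mathrm{epi}\delta_{C}^{*}$, and $z^{*}\circ g=z^{*}\circ\delta^{v}_{-Y_{+}}=\delta_{-Y_{+}}$, whose conjugate is $\delta_{(-Y_{+})^{*}}=\delta_{Y_{+}^{*}}$; its epigraph is $Y_{+}^{*}\times\mathbb{R}_{+}$, which yields the $y^{*}_{n}\in Y_{+}^{*}$ and $s_{n}\in\mathbb{R}_{+}$. The $v^{*}_{n}\in-Y_{+}^{*}$ and $(u^{*}_{n},t_{n})\in\mathrm{epi}(-v^{*}_{n}\circ h)^{*}$ appear verbatim from the composition term in Theorem~\ref{theorem1}. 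Finally, since $z^{*}\circ A=0$ here, and since
$\sum_{i=1}^{m}(z^{*}\circ F_{i})(\overline{x})=\sum_{i=1}^{m}\lambda_{i}\bigl(f_{i}(\overline{x})-\nu_{i}g_{i}(\overline{x})\bigr)=0$
by the very definition $\nu_{i}=f_{i}(\overline{x})/g_{i}(\overline{x})$, together with $(z^{*}\circ\delta^{v}_{C})(\overline{x})=0$ and $(z^{*}\circ g)(h(\overline{x}))=\delta_{-Y_{+}}(h(\overline{x}))=0$ because $h(\overline{x})\in-Y_{+}$, the right-hand side of the third limit in Theorem~\ref{theorem1} collapses to $0$; hence the convergence statements reduce precisely to the three displayed limits of Theorem~\ref{4.2}.

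The converse direction is immediate by reversing these steps: given the sequences and limits in the statement, the same identifications show that $0\in\partial^{p}(\sum_{i} F_{i}+\delta^{v}_{C}+g\circ h)(\overline{x})$, which by Lemma~\ref{l61} is equivalent to Henig proper efficiency for $(P)$. I expect the main obstacle to be bookkeeping rather than anything deep: one must verify carefully that $g=\delta^{v}_{-Y_{+}}$ is genuinely $(Y_{+},\mathbb{R}^{m}_{+})$-nondecreasing and strict star $\mathbb{R}^{m}_{+}$-lower semicontinuous so that Theorem~\ref{theorem1} applies, that the vanishing of $\sum_i\lambda_i(f_i(\overline x)-\nu_i g_i(\overline x))$ is used at exactly the right place to kill the constant in the third limit, and that the closure in Theorem~\ref{epiofthesum} (a priori a weak$^{*}$ closure) may, on the reflexive Banach space $X$, be replaced by the norm closure so that the sequential (rather than net) formulation in Theorem~\ref{theorem1} is legitimate — but all of this is already packaged inside Theorems~\ref{epiofthesum} and~\ref{theorem1}, so the proof is essentially a specialization argument.
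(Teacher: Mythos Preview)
Your proposal is correct and follows the same overall architecture as the paper: reduce to $(P_{\overline{x}})$ via Lemma~\ref{l61}, recast Henig proper efficiency as $0\in\partial^{p}(\cdots)(\overline{x})$ with $g=\delta^{v}_{-Y_{+}}$, apply Theorem~\ref{theorem1}, and then identify the scalarized pieces (in particular $(z^{*}\circ\delta^{v}_{-Y_{+}})^{*}=\delta_{Y_{+}^{*}}$ so that $\mathrm{epi}(z^{*}\circ g)^{*}=Y_{+}^{*}\times\mathbb{R}_{+}$, and the right-hand side of the third limit vanishes because $f_{i}(\overline{x})-\nu_{i}g_{i}(\overline{x})=0$).

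The one genuine difference is the decomposition you feed into Theorem~\ref{theorem1}. The paper does \emph{not} bundle $f_{i}+\nu_{i}(-g_{i})$ together; instead it introduces $2m+1$ separate vector summands $L_{1},\dots,L_{2m+1}$ (one for each $f_{i}$, one for each $\nu_{i}(-g_{i})$, and one for $\delta^{v}_{C}$), so that after scalarization the sequences $(x^{*}_{i,n},a_{i,n})\in\mathrm{epi}(\lambda_{i}f_{i})^{*}$ and $(w^{*}_{i,n},b_{i,n})\in\mathrm{epi}(\lambda_{i}\nu_{i}(-g_{i}))^{*}$ fall out directly from a single invocation of Theorem~\ref{theorem1}. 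In your route you apply Theorem~\ref{theorem1} with only $m+1$ summands and then perform a second, post-hoc split of each $\mathrm{epi}(\lambda_{i}(f_{i}+\nu_{i}(-g_{i})))^{*}$ via Theorem~\ref{epiofthesum}. That second step lands you in a \emph{closure}, so to extract honest sequences $(x^{*}_{i,n},a_{i,n})$ and $(w^{*}_{i,n},b_{i,n})$ you must, for each $n$, choose an approximant in $\mathrm{epi}(\lambda_{i}f_{i})^{*}+\mathrm{epi}(\lambda_{i}\nu_{i}(-g_{i}))^{*}$ within, say, $1/n$ of the point produced by Theorem~\ref{theorem1}, and then check that the three limits survive this perturbation. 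This is harmless (and in fact, since each $f_{i}$ is finite-valued convex lsc on a Banach space, hence continuous, the closure is even superfluous here), but it is an extra layer that the paper's $2m{+}1$-summand decomposition avoids entirely. Either route proves the theorem; the paper's is the more economical bookkeeping.
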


\begin{proof}
According to Lemma \ref{l61}, $\overline{x}$ is Henig properly efficient solution for problem (P) if and only if, $\overline{x}$ is Henig properly efficient solution for problem $(P_{\overline{x}}).$ By introducing the vector indicator mappings $\delta_{C}^{v}$ and $\delta^{v}_{-Y_{+}},$ the problem $(P_{\overline{x}})$ may be written equivalently as
\[\inf_{\substack{{x}\in X}}\left\{F_{\overline{x}}(x)+\delta^{v}_{C}(x)+(\delta^{v}_{-Y_{+}}\circ h)(x)\right\}\]

\noindent where $F_{\overline{x}}\ :\ X\longrightarrow\mathbb{R}^{m}$ is defined for any $x\in X,$ by
 \[F_{\overline{x}}(x):=\left(f_{1}(x)-\nu_{1}g_{1}(x),\hdots,f_{m}(x)-\nu_{m}g_{m}(x)\right).\]
Hence $\overline{x}$ is  Henig properly efficient solution for problem (P) if and only if,
\begin{equation}\label{Eq2}
0\in\partial^{p}\left(F_{\overline{x}}+\delta^{v}_{C}+\delta^{v}_{-Y_{+}}\circ h\right)(\overline{x}).
\end{equation}
Let us consider the following vector mappings $L_{i}$ : $X\longrightarrow\overline{\mathbb{R}^{m}},$ $(i=1,\hdots,2m+1)$ defined by
\begin{empheq}[left={L_{i}(x):=}\empheqlbrace]{align*}
 & \left(0,\hdots,f_{i}(x),\hdots,0\right)&\;\mathrm{if}\; &(i=1,\hdots,m)\\
& \left(0,\hdots,\nu_{m-i}(-g_{m-i}(x)),\hdots,0\right) &\;\mathrm{if}\; &(i=m,\hdots,2m)\\
& \delta^{v}_{C}(x)&\;\mathrm{if}\; &i=2m+1,
\end{empheq}
 where the effective domain of the mappings $L_{i},$ $(i=1,\hdots,2m+1)$ are given by
\begin{empheq}[left={\mathrm{dom}L_{i}:=}\empheqlbrace]{align*}
& \mathrm{dom}f_{i}=X&\;\mathrm{if}\; &(i=1,\hdots,m)\\
& \mathrm{dom}[\nu_{i-m}(-g_{i-m})]=X&\;\mathrm{if}\; &(i=m+1,\hdots,2m)\\
& \mathrm{dom}\delta^{v}_{C}=C &\;\mathrm{if}\; &i=2m+1.
\end{empheq}
It is easy to see that the mappings $L_{i}$ : $X\longrightarrow\overline{\mathbb{R}^{m}},$ $(i=1,\hdots,2m+1)$ are proper, $\mathbb{R}_{+}^{m}$-convex and strict star $\mathbb{R}_{+}^{m}$-lower semicontinuous. By means of these notations the expression (\ref{Eq2}) may be written equivalently as
\[0\in\partial^{p}\left(\displaystyle\sum_{i=1}^{2m+1}L_{i}+\delta^{v}_{-Y_{+}}\circ h\right)(\overline{x}).\]
Let us note that the mapping $\delta_{-Y_{+}}^{v}$ is proper, $\mathbb{R}_{+}^{m}$-convex and strict star $\mathbb{R}_{+}^{m}$-lower semicontinuous since $Y_{+}$ is  a nonempty convex and closed cone. Moreover, let us recall that $\delta_{-Y_{+}}^{v}$ is $(Y_{+},\mathbb{R}_{+}^{m})$-nondecreasing (see
\cite{maglag}) and the condition $\bar{x}\in C\cap h^{-1}(-Y_{+}),$ can be equivalently rewritten as $\bar{x}\in(\displaystyle
\bigcap_{i=1}^{2m+1}\mathrm{dom}L_{i})\cap \mathrm{dom}h\cap h^{-1}(\mathrm{dom} \delta_{-Y_{+}}^{v}).$ Hence, the mapping $L_{i}$ $(i=1,\hdots,2m+1),$ $\delta_{-Y_{+}}^{v}$ and $h$ satisfy together all the assumptions of Theorem \ref{theorem1} and then there exist $z^{*}=(\lambda_{1},\hdots,\lambda_{m})\in((\mathbb{R}_{+}^{m})^{*})^{\circ}=(\mathbb{R}_{+}\setminus\{0\})^{m},$
$(\overline{x}_{i,n}^{*},r_{i,n})\in \mathrm{epi}(z^{*}\circ L_{i})^{*}\;(i=1,\hdots,2m+1),$ $(y_{n}^{*},s_{n})\in \mathrm{epi}(z^{*}\circ \delta_{-Y_{+}}^{v})^{*},$ $v_{n}^{*}\in -Y_{+}^{*}$ and $(u_{n}^{*},t_{n})\in \mathrm{epi}(-v_{n}^{*}\circ h)^{*}$ such that
\begin{numcases}{}
\displaystyle\sum_{i=1}^{2m+1}\overline{x}_{i,n}^{*}+u_{n}^{*}\xrightarrow{{\parallel.\parallel_{X^{*}}}}0\label{r1}\\
y_{n}^{*}+v_{n}^{*}\xrightarrow{{\parallel.\parallel_{Y^{*}}}}0\nonumber\\
\displaystyle\sum_{i=1}^{2m+1}r_{i,n}+t_{n}+s_{n}\xrightarrow{\qquad}-\displaystyle\sum_{i=1}^{2m+1}(z^{*}\circ L_{i})(\overline{x})-(z^{*}\circ\delta_{-Y_{+}}^{v})(h(\overline{x})).\label{7}
\end{numcases}
It is easy to check that $z^{*}\circ \delta_{C}^{v}=\delta_{C}$ and $z^{*}\circ
\delta_{-Y_{+}}^{v}=\delta_{-Y_{+}}.$ Therefore, for each $i=1,\hdots,2m+1$ the conditions $(\overline{x}_{i,n}^{*},r_{i,n})\in \mathrm{epi}(z^{*}\circ L_{i})^{*},$ $(y_{n}^{*},s_{n})\in \mathrm{epi}(z^{*}\circ \delta_{-Y_{+}}^{v})^{*}$ and (\ref{7}) can be rewritten by means of data functions $f_{i},$ $g_{i},$ $\delta_{C}$ and $\delta_{-Y_{+}}$ as follows
\[{(\overline{x}_{i,n}^{*},r_{i,n})\in \mathrm{epi}(z^{*}\circ L_{i})^{*}\Longleftrightarrow}\left\{
    \begin{array}{ll}
         (x_{i,n}^{*},a_{i,n}):=(\overline{x}_{i,n}^{*},r_{i,n})\in \mathrm{epi}(\lambda_{i} f_{i})^{*},\\
          \qquad\quad\qquad\qquad\quad\qquad\qquad\quad\qquad\mbox{if }\; i\in\{1,\hdots,m\} \\
         (w_{i,n}^{*},b_{i,n}):=(\overline{x}_{i+m,n}^{*},r_{i+m,n})\in \mathrm{epi}(\lambda_{i}\nu_{i} (-g_{i}))^{*},\\
         \qquad\quad\qquad\quad\qquad\quad\qquad\quad\quad\quad \mbox{if}\; i\in\{1,\hdots,m\}\\
         (c_{n}^{*},d_{n}):=(\overline{x}_{2m+1,n}^{*},r_{2m+1,n})\in \mathrm{epi}(z^{*}\circ \delta_{C}^{v})^{*}=\\  \qquad\quad\qquad\quad\qquad\quad\mathrm{epi}(\delta_{C})^{*},\mbox{if}\;i=2m+1,
    \end{array}
\right.
\]

\[(y_{n}^{*},s_{n})\in \mathrm{epi}(z^{*}\circ \delta_{-Y_{+}}^{v})^{*}=\mathrm{epi}(\delta_{-Y_{+}})^{*}=\mathrm{epi}(\delta_{Y_{+}^{*}})=Y_{+}^{*}\times\mathbb{R}_{+}\]
\noindent and
\begin{multline*}
{(\ref{7})\Longleftrightarrow}
\displaystyle\sum_{i=1}^{m}a_{i,n}+\displaystyle\sum_{i=1}^{m}b_{i,n}+d_{n}+t_{n}+s_{n}\xrightarrow{\qquad}-\displaystyle\sum_{i=1}^{m}\lambda_{i}f_{i}(\overline{x})-\displaystyle\sum_{i=1}^{m}\lambda_{i}\nu_{i}(-g_{i})(\overline{x})-\delta_{C}^{}(\overline{x})\\
\qquad\qquad\qquad\qquad\qquad\qquad\qquad\qquad\qquad\qquad\qquad-\delta_{-Y_{+}}(h(\overline{x}))\\
=-\displaystyle\sum_{i=1}^{m}\lambda_{i}(f_{i}(\overline{x})+\nu_{i}(-g_{i})(\overline{x}))-\delta_{C}^{}(\overline{x})-\delta_{-Y_{+}}(h(\overline{x})).
\end{multline*}
Since $f_{i}(\overline{x})+\nu_{i}(-g_{i})(\overline{x})=0,$  $\overline{x}\in C$ and $h(\overline{x})\in-Y_{+}$ then the above limit reduces to
\[\displaystyle\sum_{i=1}^{m}a_{i,n}+\displaystyle\sum_{i=1}^{m}b_{i,n}+d_{n}+t_{n}+s_{n}\xrightarrow{\qquad}0.\]
Moreover, (\ref{r1}) can be written as follows
\[\displaystyle\sum_{i=1}^{m}x_{i,n}^{*}+\displaystyle\sum_{i=1}^{m}w_{i,n}^{*}+c_{n}^{*}+u_{n}^{*}\xrightarrow{{\parallel.\parallel_{X^{*}}}}0. \]
This completes the proof.
\end{proof}
For the second characterization, by applying Theorem \ref{4.2} and Theorem \ref{jekumyaar} we express the sequential optimality conditions of the problem $(P)$ in terms of limits for the $\epsilon$-subdifferential $(\epsilon\geq 0)$ of the functions involved at the minimizer.
\begin{theorem}\label{4.3}
Let $\overline{x}\in C\cap h^{-1}(-Y_{+})$ and $\nu_{i}:=\frac{f_{i}(\overline{x})}{g_{i}(\overline{x})}\;(i=1,\hdots,m).$ Then, $\overline{x}$ is Henig properly efficient solution for problem (P), if and only if, there exist $(\lambda_{1},\hdots,\lambda_{m})\in(\mathbb{R}_{+}\setminus\{0\})^{m},$
$\gamma_{n}\geq 0,\;x^{*}_{i,n}\in\partial_{\gamma_{n}}(\lambda_{i}f_{i})(\overline{x}),\;w^{*}_{i,n}\in\partial_{\gamma_{n}}(\lambda_{i}\nu_{i}(-g_{i}))(\overline{x})\;(i=1,\hdots,m),\;c_{n}^{*}\in N_{\gamma_{n}}(\overline{x},C),\;$ $v_{n}^{*}\in -Y_{+}^{*},\;y_{n}^{*}\in Y_{+}^{*}\cap N_{\gamma_{n}}(h(\overline{x}),-Y_{+})$  and $u_{n}^{*}\in\partial_{\gamma_{n}}(-v_{n}^{*}\circ h)(\overline{x})$ such that
\begin{numcases}{}
\gamma_{n}\longrightarrow0\nonumber\\
\displaystyle\sum_{i=1}^{m}x_{i,n}^{*}+\displaystyle\sum_{i=1}^{m}w_{i,n}^{*}+c_{n}^{*}+u_{n}^{*}\xrightarrow{{\parallel.\parallel_{X^{*}}}}0\nonumber\\
y_{n}^{*}+v_{n}^{*}\xrightarrow{{\parallel.\parallel_{Y^{*}}}}0.\nonumber
\end{numcases}
\end{theorem}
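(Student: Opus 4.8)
The plan is to obtain Theorem~\ref{4.3} as a direct consequence of Theorem~\ref{4.2} by rewriting each epigraphical membership through the $\epsilon$-subdifferential characterization of Theorem~\ref{jekumyaar}. First I would invoke Theorem~\ref{4.2}: $\overline{x}$ is a Henig properly efficient solution of $(P)$ if and only if there exist $(\lambda_{1},\hdots,\lambda_{m})\in(\mathbb{R}_{+}\setminus\{0\})^{m}$, together with points $(x^{*}_{i,n},a_{i,n})\in\mathrm{epi}(\lambda_{i}f_{i})^{*}$, $(w^{*}_{i,n},b_{i,n})\in\mathrm{epi}(\lambda_{i}\nu_{i}(-g_{i}))^{*}$, $(c^{*}_{n},d_{n})\in\mathrm{epi}\delta_{C}^{*}$, $y^{*}_{n}\in Y_{+}^{*}$, $s_{n}\in\mathbb{R}_{+}$, $v^{*}_{n}\in -Y_{+}^{*}$, and $(u^{*}_{n},t_{n})\in\mathrm{epi}(-v^{*}_{n}\circ h)^{*}$ satisfying the three asymptotic relations there. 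All the functions $\lambda_{i}f_{i}$, $\lambda_{i}\nu_{i}(-g_{i})$, $\delta_{C}$, $\delta_{-Y_{+}}$, and $-v^{*}_{n}\circ h$ are proper, convex and lower semicontinuous (using $f_i,-g_i$ convex lsc, $C$ closed convex, $Y_+$ closed convex cone, $h$ being $Y_+$-convex and $Y_+$-epi-closed so $-v_n^*\circ h$ is convex lsc for $-v_n^*\in Y_+^*$), and $\overline{x}$ (resp.\ $h(\overline{x})$) lies in the respective domains; so Theorem~\ref{jekumyaar} applies to each.

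Next I would translate each epigraphical point. By Theorem~\ref{jekumyaar} applied at $\overline{x}$, $(x^{*}_{i,n},a_{i,n})\in\mathrm{epi}(\lambda_{i}f_{i})^{*}$ means there is some $\alpha_{i,n}\geq 0$ with $x^{*}_{i,n}\in\partial_{\alpha_{i,n}}(\lambda_{i}f_{i})(\overline{x})$ and $a_{i,n}=\langle x^{*}_{i,n},\overline{x}\rangle+\alpha_{i,n}-\lambda_{i}f_{i}(\overline{x})$; similarly $w^{*}_{i,n}\in\partial_{\beta_{i,n}}(\lambda_{i}\nu_{i}(-g_{i}))(\overline{x})$ with $b_{i,n}=\langle w^{*}_{i,n},\overline{x}\rangle+\beta_{i,n}-\lambda_{i}\nu_{i}(-g_{i})(\overline{x})$, and $c^{*}_{n}\in\partial_{\delta_{n}}\delta_{C}(\overline{x})=N_{\delta_{n}}(\overline{x},C)$ with $d_{n}=\langle c^{*}_{n},\overline{x}\rangle+\delta_{n}$ (since $\delta_{C}(\overline{x})=0$), and $u^{*}_{n}\in\partial_{\eta_{n}}(-v^{*}_{n}\circ h)(\overline{x})$ with $t_{n}=\langle u^{*}_{n},\overline{x}\rangle+\eta_{n}-(-v^{*}_{n}\circ h)(\overline{x})$. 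For the term $s_{n}$: from Theorem~\ref{4.2} we have $y^{*}_{n}\in Y_{+}^{*}$ and $s_{n}\in\mathbb{R}_{+}$, and in fact the pair $(y_n^*,s_n)\in\mathrm{epi}(z^*\circ\delta_{-Y_+}^v)^*=\mathrm{epi}(\delta_{-Y_+})^*=Y_+^*\times\mathbb{R}_+$; applying Theorem~\ref{jekumyaar} to $\delta_{-Y_{+}}$ at $h(\overline{x})$ gives $\theta_{n}\geq 0$ with $y^{*}_{n}\in\partial_{\theta_{n}}\delta_{-Y_{+}}(h(\overline{x}))=N_{\theta_{n}}(h(\overline{x}),-Y_{+})$ and $s_{n}=\langle y^{*}_{n},h(\overline{x})\rangle+\theta_{n}$, and since $h(\overline{x})\in -Y_{+}$ and $y^{*}_{n}\in Y_{+}^{*}$ we have $\langle y^{*}_{n},h(\overline{x})\rangle\leq 0$, hence $0\leq s_{n}\leq\theta_{n}$.

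Then I would feed these into the third asymptotic relation of Theorem~\ref{4.2}. Using $\lambda_i f_i(\overline x)+\lambda_i\nu_i(-g_i)(\overline x)=0$ and $(-v_n^*\circ h)(\overline x)=\langle -v_n^*,h(\overline x)\rangle=\langle v_n^*,-h(\overline x)\rangle$, the convergence $\sum_i a_{i,n}+\sum_i b_{i,n}+d_n+t_n+s_n\to 0$ becomes
\begin{equation*}
\Big\langle \sum_{i=1}^{m}x^{*}_{i,n}+\sum_{i=1}^{m}w^{*}_{i,n}+c^{*}_{n}+u^{*}_{n},\,\overline{x}\Big\rangle+\sum_{i=1}^{m}\alpha_{i,n}+\sum_{i=1}^{m}\beta_{i,n}+\delta_{n}+\eta_{n}+s_{n}-\langle v_{n}^{*},h(\overline{x})\rangle+\langle y^{*}_{n},h(\overline{x})\rangle\longrightarrow 0.
\end{equation*}
Since $\sum_i x^{*}_{i,n}+\sum_i w^{*}_{i,n}+c^{*}_{n}+u^{*}_{n}\to 0$ in $X^{*}$ and $\overline{x}$ is fixed, the first bracket tends to $0$; since $y^{*}_{n}+v^{*}_{n}\to 0$ in $Y^{*}$ and $h(\overline{x})$ is fixed, $\langle y^{*}_{n}+v^{*}_{n},h(\overline{x})\rangle\to 0$, so $-\langle v_n^*,h(\overline x)\rangle+\langle y_n^*,h(\overline x)\rangle\to 2\langle y_n^*,h(\overline x)\rangle$... more carefully, $-\langle v_n^*,h(\overline x)\rangle+\langle y_n^*,h(\overline x)\rangle=\langle y_n^*-v_n^*,h(\overline x)\rangle+2\langle v_n^*,h(\overline x)\rangle$; rather than chase signs I would note directly that each remaining summand $\alpha_{i,n},\beta_{i,n},\delta_{n},\eta_{n}$ is $\geq 0$, that $s_n\geq 0$ and $s_n-\langle v_n^*,h(\overline x)\rangle=\theta_n+\langle y_n^*+v_n^*,-h(\overline x)\rangle+\langle v_n^*,h(\overline x)\rangle$; the cleanest route is to set $\gamma_{n}:=\max\{\alpha_{1,n},\hdots,\alpha_{m,n},\beta_{1,n},\hdots,\beta_{m,n},\delta_{n},\eta_{n},\theta_{n}\}$ and show $\gamma_{n}\to 0$. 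The main obstacle, and the one deserving care, is exactly this: arguing that the scalar slack $\gamma_{n}\to 0$. This follows because all the $\epsilon$-values enter the scalar limit with nonnegative sign once the inner products are neutralized by the two norm-convergences (and the sign facts $\langle y_n^*,h(\overline x)\rangle\le 0$, $\langle v_n^*,h(\overline x)\rangle\le 0$), so a sum of nonnegative terms tends to $0$, forcing each to $0$. Finally, enlarging each individual $\epsilon_{i,n}$ up to the common $\gamma_{n}$ only enlarges the corresponding $\epsilon$-subdifferential ($\partial_{\epsilon}\subseteq\partial_{\epsilon'}$ for $\epsilon\le\epsilon'$), so we may replace every $\alpha_{i,n},\beta_{i,n},\delta_{n},\eta_{n},\theta_{n}$ by $\gamma_{n}$, yielding $x^{*}_{i,n}\in\partial_{\gamma_{n}}(\lambda_{i}f_{i})(\overline{x})$, $w^{*}_{i,n}\in\partial_{\gamma_{n}}(\lambda_{i}\nu_{i}(-g_{i}))(\overline{x})$, $c^{*}_{n}\in N_{\gamma_{n}}(\overline{x},C)$, $y^{*}_{n}\in Y_{+}^{*}\cap N_{\gamma_{n}}(h(\overline{x}),-Y_{+})$, $u^{*}_{n}\in\partial_{\gamma_{n}}(-v^{*}_{n}\circ h)(\overline{x})$, together with $\gamma_{n}\to 0$ and the two unchanged norm-convergences. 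The converse direction is immediate: given data as in Theorem~\ref{4.3}, set $a_{i,n}=\langle x^{*}_{i,n},\overline{x}\rangle+\gamma_{n}-\lambda_{i}f_{i}(\overline{x})$, etc., to recover the epigraphical points of Theorem~\ref{4.2}, whose three relations then hold by reversing the computation, so Theorem~\ref{4.2} gives that $\overline{x}$ is Henig properly efficient.
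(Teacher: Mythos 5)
Your forward direction is essentially the paper's own argument: invoke Theorem~\ref{4.2}, translate each epigraphical membership via Theorem~\ref{jekumyaar} into an $\epsilon$-subdifferential membership plus an explicit formula for $a_{i,n},b_{i,n},d_{n},s_{n},t_{n}$, substitute into the third limit, observe that the inner-product terms are killed by the two norm convergences so that the sum of the nonnegative slacks $\alpha_{i,n},\beta_{i,n},\delta_{n},\eta_{n},\theta_{n}$ tends to $0$, and finally pass to $\gamma_{n}:=\max\{\cdots\}$ using the monotonicity $\partial_{\epsilon}\subseteq\partial_{\epsilon'}$ for $\epsilon\leq\epsilon'$. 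One caveat: your displayed intermediate identity is garbled --- you keep $s_{n}$ unexpanded while also adding $\langle y_{n}^{*},h(\overline{x})\rangle$, and the sign on $\langle v_{n}^{*},h(\overline{x})\rangle$ is wrong (since $t_{n}=\langle u_{n}^{*},\overline{x}\rangle+\eta_{n}+\langle v_{n}^{*},h(\overline{x})\rangle$, the correct residue is exactly $\langle y_{n}^{*}+v_{n}^{*},h(\overline{x})\rangle$, which is precisely what the two norm convergences neutralize). You flag this yourself and your stated mechanism is the right one, so this is a computational blemish rather than a gap, but it should be cleaned up.

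Where you genuinely diverge is the converse. The paper does not go back through Theorem~\ref{4.2}: it writes out the $\gamma_{n}$-subgradient inequalities at $\overline{x}$, sums them with $y:=h(x)$, passes to the limit to show $\overline{x}$ minimizes the scalarized function $\sum_{i}\lambda_{i}(f_{i}+\nu_{i}(-g_{i}))+\delta_{C}+\delta_{-Y_{+}}\circ h$, and then invokes the scalarization Theorem~\ref{scalarization} and Lemma~\ref{l61}. You instead reconstruct epigraphical points ($a_{i,n}:=\langle x_{i,n}^{*},\overline{x}\rangle+\gamma_{n}-\lambda_{i}f_{i}(\overline{x})$, etc.) and feed them back into the sufficiency half of Theorem~\ref{4.2}. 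This is shorter and perfectly legitimate, since the same algebraic identity shows $\sum a_{i,n}+\sum b_{i,n}+d_{n}+t_{n}+s_{n}\to 0$; just make sure to record that the reconstructed $s_{n}=\langle y_{n}^{*},h(\overline{x})\rangle+\gamma_{n}$ is indeed in $\mathbb{R}_{+}$ (which follows from $y_{n}^{*}\in N_{\gamma_{n}}(h(\overline{x}),-Y_{+})$ tested at $y=0$), as Theorem~\ref{4.2} requires $s_{n}\geq 0$. What your route buys is economy; what the paper's route buys is independence from the exact packaging of Theorem~\ref{4.2}, since it re-derives minimality from first principles.
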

\begin{proof} By virtue of Theorem \ref{4.2}, $\overline{x}$ is properly efficient solution of the problem $(P)$ in the sense of Henig, if and only if, there exist $(\lambda_{1},\hdots,\lambda_{m})\in(\mathbb{R}_{+}\setminus\{0\})^{m},\; (x^{*}_{i,n},a_{i,n})\in\mathrm{epi}(\lambda_{i}f_{i})^{*},\; (w^{*}_{i,n},b_{i,n})\in\mathrm{epi}(\lambda_{i}\nu_{i}(-g_{i}))^{*}\; (i=1,\hdots m),\;(c^{*}_{n},d_{n})\in\mathrm{epi}\delta_{C}^{*},\; y^{*}_{n}\in Y_{+}^{*},\; s_{n}\in\mathbb{R}_{+},$ ${v}_{n}^{*}\in-Y_{+}^{*}$ and $(u^{*}_{n},t_{n})\in\mathrm{epi}(-v_{n}^{*}\circ h)^{*}$ such that
\begin{numcases}{}
\displaystyle\sum_{i=1}^{m}x_{i,n}^{*}+\displaystyle\sum_{i=1}^{m}w_{i,n}^{*}+c_{n}^{*}+u_{n}^{*}\xrightarrow{{\parallel.\parallel_{X^{*}}}}0\label{8}\\
y_{n}^{*}+v_{n}^{*}\xrightarrow{{\parallel.\parallel_{Y^{*}}}}0\label{9}\\
\displaystyle\sum_{i=1}^{m}a_{i,n}+\displaystyle\sum_{i=1}^{m}b_{i,n}+d_{n}+t_{n}+s_{n}\xrightarrow{\qquad}0.\label{10}
\end{numcases}

Since $(y_{n}^{*},s_{n})\in\mathrm{epi}\delta_{Y_{+}^{*}}=\mathrm{epi}\delta^{*}_{-Y_{+}},$  it follows according to Theorem \ref{jekumyaar}, there exist $\alpha_{i,n},\;\beta_{i,n},\;\eta_{n} ,\;\theta_{n},\;\epsilon_{n}\in \mathbb{R}_{+}$ such that $x^{*}_{i,n}\in\partial_{\alpha_{i,n}}(\lambda_{i}f_{i})(\overline{x}),\;$ $w^{*}_{i,n}\in\partial_{\beta_{i,n}}(\lambda_{i}\nu_{i}(-g_{i}))(\overline{x}),\;c_{n}^{*}\in N_{\eta_{n}}(\overline{x},C),\;y_{n}^{*}\in N_{\theta_{n}}(h(\overline{x}),-Y_{+}),\;u_{n}^{*}\in\partial_{\epsilon_{n}}(-v_{n}^{*}\circ h)(\overline{x})$ and
\begin{numcases}{}
a_{i,n}=\langle x^{*}_{i,n}, \overline{x}\rangle+\alpha_{i,n}-(\lambda_{i}f_{i})(\overline{x}),\;\; i=1,\hdots,m\nonumber\\
b_{i,n}=\langle w^{*}_{i,n}, \overline{x}\rangle+\beta_{i,n}-(\lambda_{i}\nu_{i}(-g_{i}))(\overline{x}),\;\; i=1,\hdots,m\nonumber\\
d_{n}=\langle c^{*}_{n}, \overline{x}\rangle+\eta_{n}\nonumber\\
s_{n}=\langle y^{*}_{n}, h(\overline{x})\rangle+\theta_{n}\nonumber\\
t_{n}=\langle u^{*}_{n}, \overline{x}\rangle+\epsilon_{n}-(-v_{n}^{*}\circ h)(\overline{x}).\nonumber
\end{numcases}

By adding the terms of the above equalities and using the fact that $f_{i}(\overline{x})+\nu_{i}(-g_{i})(\overline{x})=0,$ we obtain
$ $\newpage
\begin{eqnarray*}
  \displaystyle\sum_{i=1}^{m}a_{i,n}+\displaystyle\sum_{i=1}^{m}b_{i,n}+d_{n}+t_{n}+s_{n} &=& \displaystyle\sum_{i=1}^{m}[\langle x^{*}_{i,n}, \overline{x}\rangle+\alpha_{i,n}-(\lambda_{i}f_{i})(\overline{x})]\\
&&+\displaystyle\sum_{i=1}^{m}[\langle w^{*}_{i,n}, \overline{x}\rangle+\beta_{i,n}-(\lambda_{i}\nu_{i}(-g_{i}))(\overline{x})]\\
& &+\langle c^{*}_{n}, \overline{x}\rangle+\eta_{n}+\langle y^{*}_{n}, h(\overline{x})\rangle+\theta_{n}+\langle u^{*}_{n}, \overline{x}\rangle+\epsilon_{n}\\
&&-(-v_{n}^{*}\circ h)(\overline{x}).\\
&=&\langle\displaystyle\sum_{i=1}^{m} x^{*}_{i,n}+\displaystyle\sum_{i=1}^{m} w^{*}_{i,n}+c^{*}_{n}+u^{*}_{n}, \overline{x}\rangle+\langle y^{*}_{n}+v_{n}^{*}, h(\overline{x})\rangle\\
&&+\displaystyle\sum_{i=1}^{m}\alpha_{i,n}+\displaystyle\sum_{i=1}^{m}\beta_{i,n}+\eta_{n}+\theta_{n}+\epsilon_{n}.
\end{eqnarray*}
It follows from (\ref{8}), (\ref{9}) and (\ref{10}) that
\begin{equation}\label{11}
\displaystyle\sum_{i=1}^{m}\alpha_{i,n}+\displaystyle\sum_{i=1}^{m}\beta_{i,n}+\eta_{n}+\epsilon_{n}\longrightarrow0,\quad n\longmapsto+\infty.
\end{equation}
Moreover, since $\alpha_{i,n},$ $\beta_{i,n},$ $\eta_{n},$ $\theta_{n},$ $\epsilon_{n}\in\mathbb{R}_{+},$ we get from (\ref{11}) that $\alpha_{i,n}\longrightarrow0,$ $\beta_{i,n}\longrightarrow0,$ $\eta_{n}\longrightarrow0,$
$\theta_{n}\longrightarrow0,$
$\epsilon_{n}\longrightarrow0$ as $n\longmapsto+\infty.$ By setting $\gamma_{n}:=\max_{1\leq i\leq m}\{\alpha_{i,n}, \beta_{i,n}, \eta_{n}, \theta_{n}, \epsilon_{n}\},$ it follows that
$x^{*}_{i,n}\in\partial_{\gamma_{n}}(\lambda_{i}f_{i})(\overline{x}),\; w^{*}_{i,n}\in\partial_{\gamma_{n}}(\lambda_{i}\nu_{i}(-g_{i}))(\overline{x}),\;
c_{n}^{*}\in N_{\gamma_{n}}(\overline{x},C),\; N_{\gamma_{n}}(h(\overline{x}),-Y_{+}),\; u_{n}^{*}\in\partial_{\gamma_{n}}(-v_{n}^{*}\circ h)(\overline{x})$ with $\gamma_{n}\longrightarrow0,$ as $n\longmapsto+\infty.$\\
\bigskip
Conversely, suppose that there exist $(\lambda_{1},\hdots,\lambda_{m})\in(\mathbb{R}_{+}\setminus\{0\})^{m},$
$\gamma_{n}\geq 0,\;x^{*}_{i,n}\in\partial_{\gamma_{n}}(\lambda_{i}f_{i})(\overline{x}),\;w^{*}_{i,n}\in\partial_{\gamma_{n}}(\lambda_{i}\nu_{i}(-g_{i}))(\overline{x})\;(i=1,\hdots,m),\;c_{n}^{*}\in N_{\gamma_{n}}(\overline{x},C),\;
$ $v_{n}^{*}\in -Y_{+}^{*},$ $y_{n}^{*}\in Y_{+}^{*}\cap N_{\gamma_{n}}(h(\overline{x}),-Y_{+}),$  $u_{n}^{*}\in\partial_{\gamma_{n}}(-v_{n}^{*}\circ h)(\overline{x})$ and
\begin{numcases}{}
\gamma_{n}\longrightarrow0\nonumber\\
\displaystyle\sum_{i=1}^{m}x_{i,n}^{*}+\displaystyle\sum_{i=1}^{m}w_{i,n}^{*}+c_{n}^{*}+u_{n}^{*}\xrightarrow{{\parallel.\parallel_{X^{*}}}}0\nonumber\\
y_{n}^{*}+v_{n}^{*}\xrightarrow{{\parallel.\parallel_{Y^{*}}}}0.\nonumber
\end{numcases}

For any $x\in C\cap h^{-1}(-Y_{+}),$ $y\in -Y_{+}$ and any
positive integer $n,$ we have
\begin{eqnarray*}
  \lambda_{i}f_{i}(x)-\lambda_{i}f_{i}(\overline{x}) &\geq& \langle x^{*}_{i,n},x-\overline{x}\rangle-\gamma_{n},\;\; (i=1,\hdots,m)\\
   \lambda_{i}\nu_{i}(-g_{i})(x)-\lambda_{i}\nu_{i}(-g_{i})(\overline{x})&\geq&\langle w^{*}_{i,n},x-\overline{x}\rangle-\gamma_{n},\;\; (i=1,\hdots,m) \\
  0 &\geq&\langle c^{*}_{n},x-\overline{x}\rangle-\gamma_{n} \\
  0 &\geq&\langle y^{*}_{n},y-h(\overline{x})\rangle-\gamma_{n} \\
(-v_{n}^{*}\circ h)(x)-(-v_{n}^{*}\circ h)(\overline{x})&\geq&\langle u^{*}_{n},x-\overline{x}\rangle-\gamma_{n}.
\end{eqnarray*}
By adding them up and by taking $y:=h(x),$ we get
\begin{multline*}
  \displaystyle\sum_{i=1}^{m}\lambda_{i}(f_{i}+\nu_{i}(-g_{i}))(x)-\displaystyle\sum_{i=1}^{m}\lambda_{i}(f_{i}+\nu_{i}(-g_{i}))(\overline{x}) +\langle v_{n}^{*}+y_{n}^{*},h(\overline{x})-h(x) \rangle \\ \geq \langle \displaystyle\sum_{i=1}^{m}x^{*}_{i,n}+\displaystyle\sum_{i=1}^{m}w^{*}_{i,n}+c^{*}_{n}+u^{*}_{n},x-\overline{x}\rangle-5\gamma_{n}.
  \end{multline*}
  By taking the limit as $n\longmapsto+\infty$ in both terms of the above inequality, we deduce that
\[
  \displaystyle\sum_{i=1}^{m}\lambda_{i}(f_{i}+\nu_{i}(-g_{i}))(x)-\displaystyle\sum_{i=1}^{m}\lambda_{i}(f_{i}+\nu_{i}(-g_{i}))(\overline{x})\geq 0,\;\forall x\in C\cap h^{-1}(-Y_{+})
  \]
i.e.
\begin{multline*}
  \displaystyle\sum_{i=1}^{m}\lambda_{i}(f_{i}+\nu_{i}(-g_{i}))(x)+\delta_{C}(x)
  +(\delta_{-Y_{+}}\circ h)(x)-\displaystyle\sum_{i=1}^{m}\lambda_{i}(f_{i}+\nu_{i}(-g_{i}))(\overline{x})-\delta_{C}(\bar{x})\\-\delta_{-Y_{+}}\circ h(\bar{x}) \geq 0,\; \forall x\in X.
  \end{multline*}

  By setting $z^{*}:=(\lambda_{1},\hdots,\lambda_{m}),$ it is clear that $z^{*}\in(\mathbb{R}_{+}\backslash\{0\})^{m}=((\mathbb{R}_{+}^{m})^{*})^{\circ}.$ As $z^{*}\circ\delta_{C}^{v}=\delta_{C}$ and $z^{*}\circ\delta_{-Y_{+}}^{v}=\delta_{-Y_{+}},$ it follows that
  \[0\in \partial\left(z^{*}\circ (f_{1}+\nu_{1}(-g_{1}),\hdots,f_{m}+\nu_{m}(-g_{m}))+z^{*}\circ\delta_{C}^{v}+z^{*}\circ\delta_{-Y_{+}}^{v}\circ h\right)(\overline{x})\]

i.e.
\[0\in \partial\left(z^{*}\circ\left( (f_{1}-\nu_{1}g_{1},\hdots,f_{m}-\nu_{m}g_{m})+\delta_{C}^{v}+\delta^{v}_{-Y_{+}}\circ h\right)\right)(\overline{x}).\]
The mapping $\left((f_{1}-\nu_{1}g_{1},\hdots,f_{m}-\nu_{m}g_{m})+\delta_{C}^{v}+\delta^{v}_{-Y_{+}}\circ h\right)$ is obviously $\mathbb{R}_{+}^{m}$-convex and by virtue of scalarization Theorem \ref{scalarization}, we get
\[0\in\partial^{p}\left( (f_{1}-\nu_{1}g_{1},\hdots,f_{m}-\nu_{m}g_{m})+\delta_{C}^{v}+\delta^{v}_{-Y_{+}}\circ h\right)(\overline{x}),\]
which means that $\overline{x}$ is Henig properly efficient solution for problem $(P_{\overline{x}})$ and by Lemma \ref{l61}, $\overline{x}$ is Henig properly efficient solution for problem $(P)$. The proof is complete.
\end{proof}

Similarly, we establish the sequential optimality
conditions for $(P)$ in terms of the subdifferentials of the functions involved.

\begin{theorem}\label{4.4}
Let $\overline{x}\in C\cap h^{-1}(-Y_{+})$ and $\nu_{i}:=\frac{f_{i}(\overline{x})}{g_{i}(\overline{x})}\; (i=1,\hdots,m).$ Then, $\overline{x}$ is Henig properly efficient solution for problem $(P),$ if and only if, there exist $(\lambda_{1},\hdots,\lambda_{m})\in(\mathbb{R}_{+}\setminus\{0\})^{m},$ $x_{i,n}\in X,\;w_{i,n}\in X,\;c_{n}\in C,\;u_{n}\in\mathrm{dom}(-v_{n}^{*}\circ h),\;y_{n}\in-Y_{+},\;x^{*}_{i,n}\in\partial_{}(\lambda_{i}f_{i})(x_{i,n}),\;w^{*}_{i,n}\in\partial_{}(\lambda_{i}\nu_{i}(-g_{i}))(w_{i,n}),\;c_{n}^{*}\in N_{}(c_{n},C),\;{y}_{n}^{*}\in N_{}(y_{n},-Y_{+}),\;v_{n}^{*}\in -Y_{+}^{*}$ and $u_{n}^{*}\in\partial_{}(-v_{n}^{*}\circ h)(u_{n})$ such that
\begin{numcases}{}
x_{i,n}\xrightarrow{\parallel.\parallel_{X}}\overline{x},\ w_{i,n}\xrightarrow{\parallel.\parallel_{X}}\overline{x},\ c_{n}\xrightarrow{\parallel.\parallel_{X}}\overline{x},\ u_{n} \xrightarrow{\parallel.\parallel_{X}}\overline{x},\;\nonumber\\
\displaystyle\sum_{i=1}^{m}x^{*}_{i,n}+\displaystyle\sum_{i=1}^{m}w^{*}_{i,n}+c_{n}^{*}+u^{*}_{n} \xrightarrow{\|.\|_{X^{*}}}0,\ y^{*}_{n}+v^{*}_{n} \xrightarrow{\|.\|_{Y^{*}}}0,\nonumber\\
\lambda_{i}f_{i}(x_{i,n})-\langle x^{*}_{i,n}, x_{i,n}-\overline{x}\rangle\longrightarrow \lambda_{i}f_{i}(\overline{x})\;(i=1,\hdots,m)\quad\nonumber\\
\lambda_{i}\nu_{i}(-g_{i})(w_{i,n})-\langle w^{*}_{i,n}, w_{i,n}-\overline{x}\rangle\longrightarrow \lambda_{i}\nu_{i}(-g_{i})(\overline{x})\quad(i=1,\hdots,m)\nonumber\\
\langle c^{*}_{n},c_{n}-\overline{x}\rangle\longrightarrow 0,\;\langle y_{n}^*,y_{n}-h(\bar x)\rangle{\longrightarrow} 0\nonumber\\
\langle u^{*}_{n},u_{n}-\overline{x}\rangle+\langle v_{n}^{*},h(u_{n})-h(\overline{x})\rangle\longrightarrow 0.\nonumber
\end{numcases}

\end{theorem}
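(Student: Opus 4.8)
The plan is to derive the statement from the $\epsilon$-subdifferential characterization of Theorem~\ref{4.3} by a single application of the Br{\o}ndsted--Rockafellar theorem (Theorem~\ref{thibault}), and conversely to re-run the argument of the converse part of Theorem~\ref{4.3} with the subgradient inequalities written at the nearby base points, the ``slack'' limits of the statement being exactly what is needed to absorb the base-point contributions.

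\emph{Necessity.} Starting from $\overline{x}$ Henig properly efficient for $(P)$, Theorem~\ref{4.3} furnishes $(\lambda_{1},\dots,\lambda_{m})\in(\mathbb{R}_{+}\setminus\{0\})^{m}$, $\gamma_{n}\to 0$ (which we may take $>0$, enlarging it slightly if needed), $x^{*}_{i,n}\in\partial_{\gamma_{n}}(\lambda_{i}f_{i})(\overline{x})$, $w^{*}_{i,n}\in\partial_{\gamma_{n}}(\lambda_{i}\nu_{i}(-g_{i}))(\overline{x})$, $c^{*}_{n}\in\partial_{\gamma_{n}}\delta_{C}(\overline{x})=N_{\gamma_{n}}(\overline{x},C)$, $v^{*}_{n}\in-Y_{+}^{*}$, $y^{*}_{n}\in\partial_{\gamma_{n}}\delta_{-Y_{+}}(h(\overline{x}))=N_{\gamma_{n}}(h(\overline{x}),-Y_{+})$ and $u^{*}_{n}\in\partial_{\gamma_{n}}(-v^{*}_{n}\circ h)(\overline{x})$ with $\sum_{i}x^{*}_{i,n}+\sum_{i}w^{*}_{i,n}+c^{*}_{n}+u^{*}_{n}\to 0$ in $X^{*}$ and $y^{*}_{n}+v^{*}_{n}\to 0$ in $Y^{*}$. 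Each of $\lambda_{i}f_{i}$, $\lambda_{i}\nu_{i}(-g_{i})$ (here $\nu_{i}\geq 0$ since $f_{i}\geq 0$, $g_{i}>0$, and $-g_{i}$ is convex lower semicontinuous by hypothesis), $\delta_{C}$, $\delta_{-Y_{+}}$ and $-v^{*}_{n}\circ h$ (proper, convex and lower semicontinuous since $-v^{*}_{n}\in Y_{+}^{*}$ is $Y_{+}$-nondecreasing, $h$ is $Y_{+}$-convex and $Y_{+}$-epi-closed) is proper, convex and lower semicontinuous on a Banach space. I would then apply Theorem~\ref{thibault} with $\epsilon=\gamma_{n}$ to $\lambda_{i}f_{i}$ at $\overline{x}$, to $\lambda_{i}\nu_{i}(-g_{i})$ at $\overline{x}$, to $\delta_{C}$ at $\overline{x}$, to $\delta_{-Y_{+}}$ at $h(\overline{x})$ and to $-v^{*}_{n}\circ h$ at $\overline{x}$ (keeping $v^{*}_{n}$ fixed); this produces $x_{i,n},w_{i,n}\in X$, $c_{n}\in C$, $y_{n}\in-Y_{+}$, $u_{n}\in\mathrm{dom}(-v^{*}_{n}\circ h)$ and exact subgradients which I relabel $x^{*}_{i,n}\in\partial(\lambda_{i}f_{i})(x_{i,n})$, $w^{*}_{i,n}\in\partial(\lambda_{i}\nu_{i}(-g_{i}))(w_{i,n})$, $c^{*}_{n}\in N(c_{n},C)$, $y^{*}_{n}\in N(y_{n},-Y_{+})$, $u^{*}_{n}\in\partial(-v^{*}_{n}\circ h)(u_{n})$. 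Part~1 of Theorem~\ref{thibault} gives $\|x_{i,n}-\overline{x}\|_{X},\|w_{i,n}-\overline{x}\|_{X},\|c_{n}-\overline{x}\|_{X},\|u_{n}-\overline{x}\|_{X}\leq\sqrt{\gamma_{n}}\to 0$; part~2 shows each new dual element lies within $\sqrt{\gamma_{n}}$ in norm of the old one, so the limits $\sum_{i}x^{*}_{i,n}+\sum_{i}w^{*}_{i,n}+c^{*}_{n}+u^{*}_{n}\to 0$ and $y^{*}_{n}+v^{*}_{n}\to 0$ are preserved; and part~3, combined with $\delta_{C}(c_{n})=\delta_{C}(\overline{x})=0$, $\delta_{-Y_{+}}(y_{n})=\delta_{-Y_{+}}(h(\overline{x}))=0$, the identity $(-v^{*}_{n}\circ h)(u_{n})-(-v^{*}_{n}\circ h)(\overline{x})=-\langle v^{*}_{n},h(u_{n})-h(\overline{x})\rangle$ and $\lambda_{i}f_{i}(\overline{x})+\lambda_{i}\nu_{i}(-g_{i})(\overline{x})=\lambda_{i}(f_{i}(\overline{x})-\nu_{i}g_{i}(\overline{x}))=0$, yields the remaining limit relations of the statement.

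\emph{Sufficiency.} Conversely, given the data of the statement, I would fix any $x\in C\cap h^{-1}(-Y_{+})$ and write the exact subgradient inequalities at the base points with each scalar product expressed relative to $\overline{x}$, namely
\[
\lambda_{i}f_{i}(x)\geq\big[\lambda_{i}f_{i}(x_{i,n})-\langle x^{*}_{i,n},x_{i,n}-\overline{x}\rangle\big]+\langle x^{*}_{i,n},x-\overline{x}\rangle,
\]
the analogous inequality for $\lambda_{i}\nu_{i}(-g_{i})$ at $w_{i,n}$, the relation $0=\delta_{C}(x)\geq\langle c^{*}_{n},x-c_{n}\rangle$, the inequality $\langle y^{*}_{n},h(x)-y_{n}\rangle\leq 0$ (valid since $h(x)\in-Y_{+}$ and $y^{*}_{n}\in N(y_{n},-Y_{+})$) and $-\langle v^{*}_{n},h(x)\rangle\geq-\langle v^{*}_{n},h(u_{n})\rangle+\langle u^{*}_{n},x-u_{n}\rangle$; adding the $m$ inequalities of the first kind, the $m$ of the second kind, the $\delta_{C}$ one and the sum of the last two (after splitting $h(x)$ and $h(u_{n})$ about $h(\overline{x})$ so that every $h(x)$-term collapses into $\langle y^{*}_{n}+v^{*}_{n},h(\overline{x})-h(x)\rangle$) produces
\[
\sum_{i=1}^{m}\lambda_{i}\big(f_{i}+\nu_{i}(-g_{i})\big)(x)\ \geq\ \varrho_{n}\ +\ \Big\langle\sum_{i=1}^{m}x^{*}_{i,n}+\sum_{i=1}^{m}w^{*}_{i,n}+c^{*}_{n}+u^{*}_{n},\,x-\overline{x}\Big\rangle\ +\ \big\langle y^{*}_{n}+v^{*}_{n},\,h(x)-h(\overline{x})\big\rangle,
\]
with $\varrho_{n}:=\sum_{i}[\lambda_{i}f_{i}(x_{i,n})-\langle x^{*}_{i,n},x_{i,n}-\overline{x}\rangle]+\sum_{i}[\lambda_{i}\nu_{i}(-g_{i})(w_{i,n})-\langle w^{*}_{i,n},w_{i,n}-\overline{x}\rangle]-\langle c^{*}_{n},c_{n}-\overline{x}\rangle-\langle y^{*}_{n},y_{n}-h(\overline{x})\rangle-[\langle v^{*}_{n},h(u_{n})-h(\overline{x})\rangle+\langle u^{*}_{n},u_{n}-\overline{x}\rangle]$, which by the limit relations of the statement tends to $\sum_{i}\lambda_{i}(f_{i}(\overline{x})+\nu_{i}(-g_{i})(\overline{x}))=0$. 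Letting $n\to\infty$, and using $\sum_{i}x^{*}_{i,n}+\sum_{i}w^{*}_{i,n}+c^{*}_{n}+u^{*}_{n}\to 0$ and $\|y^{*}_{n}+v^{*}_{n}\|_{Y^{*}}\to 0$ with $x-\overline{x}$ and $h(x)-h(\overline{x})$ fixed, I obtain $\sum_{i=1}^{m}\lambda_{i}(f_{i}-\nu_{i}g_{i})(x)\geq 0$ for all $x\in C\cap h^{-1}(-Y_{+})$. Since the left-hand side is $+\infty$ off the feasible set, this means $0\in\partial\big(\sum_{i=1}^{m}\lambda_{i}(f_{i}-\nu_{i}g_{i})+\delta_{C}+\delta_{-Y_{+}}\circ h\big)(\overline{x})=\partial\big(z^{*}\circ[(f_{1}-\nu_{1}g_{1},\dots,f_{m}-\nu_{m}g_{m})+\delta^{v}_{C}+\delta^{v}_{-Y_{+}}\circ h]\big)(\overline{x})$ for $z^{*}:=(\lambda_{1},\dots,\lambda_{m})\in((\mathbb{R}^{m}_{+})^{*})^{\circ}$ (using $z^{*}\circ\delta^{v}_{C}=\delta_{C}$ and $z^{*}\circ\delta^{v}_{-Y_{+}}=\delta_{-Y_{+}}$), so by the scalarization Theorem~\ref{scalarization} $0\in\partial^{p}\big((f_{1}-\nu_{1}g_{1},\dots,f_{m}-\nu_{m}g_{m})+\delta^{v}_{C}+\delta^{v}_{-Y_{+}}\circ h\big)(\overline{x})$, i.e.\ $\overline{x}$ is Henig properly efficient for $(P_{\overline{x}})$, hence for $(P)$ by Lemma~\ref{l61}.

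\textbf{Main obstacle.} The only non-routine point is the bookkeeping in the sufficiency part: $h(x)$ and $h(u_{n})$ must be decomposed relative to $h(\overline{x})$ so that all the $h$-dependence in the free variable $x$ is funnelled into the single term $\langle y^{*}_{n}+v^{*}_{n},h(\overline{x})-h(x)\rangle$, which vanishes in the limit precisely because $\|y^{*}_{n}+v^{*}_{n}\|_{Y^{*}}\to 0$; once this is arranged, the whole argument reduces to Theorem~\ref{4.3}, Theorem~\ref{thibault}, Theorem~\ref{scalarization} and Lemma~\ref{l61}.
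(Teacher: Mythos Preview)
Your proposal is correct and follows essentially the same route as the paper's own proof: for necessity you invoke Theorem~\ref{4.3} and then apply the Br{\o}ndsted--Rockafellar theorem (Theorem~\ref{thibault}) to each of the five families of $\gamma_{n}$-subgradients, exactly as the paper does, and for sufficiency you add up the exact subgradient inequalities at the nearby base points, split the $h$-terms about $h(\overline{x})$, pass to the limit, and conclude via scalarization (Theorem~\ref{scalarization}) and Lemma~\ref{l61}, again matching the paper step for step. Your side remarks on why $-v_{n}^{*}\circ h$ is proper, convex and lower semicontinuous, and on replacing $\gamma_{n}$ by a strictly positive perturbation when needed, are details the paper leaves implicit but are welcome clarifications.
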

\begin{proof}
According to Theorem \ref{4.3}, $\overline{x}$ is Henig properly efficient solution for problem $(P),$ if and only if, there exist $(\lambda_{1},\hdots,\lambda_{m})\in(\mathbb{R}_{+}\setminus\{0\})^{m},$
$\gamma_{n}\geq 0,\; \overline{x}^{*}_{i,n}\in\partial_{\gamma_{n}}(\lambda_{i}f_{i})(\overline{x}),\; \overline{w}^{*}_{i,n}\in\partial_{\gamma_{n}}(\lambda_{i}\nu_{i}(-g_{i}))(\overline{x}),\;\overline{c}_{n}^{*}\in N_{\gamma_{n}}(\overline{x},C),\; {v}_{n}^{*}\in -Y_{+}^{*},\;\overline{y}_{n}^{*}\in Y_{+}^{*}\cap N_{\gamma_{n}}(h(\overline{x}),-Y_{+}),\; \overline{u}_{n}^{*}\in\partial_{\gamma_{n}}(-v_{n}^{*}\circ h)(\overline{x})$ and
\begin{numcases}{}
\gamma_{n}\longrightarrow0\nonumber\\
\overline{y}_{n}^{*}+v_{n}^{*}\xrightarrow{{\parallel.\parallel_{Y^{*}}}}0
\label{in1}\\
\displaystyle\sum_{i=1}^{m}\overline{x}_{i,n}^{*}+\displaystyle\sum_{i=1}^{m}\overline{w}_{i,n}^{*}+\overline{c}_{n}^{*}+\overline{u}_{n}^{*}\xrightarrow{{\parallel.\parallel_{X^{*}}}}0\label{in2}.
\end{numcases}
As
$\overline{x}^{*}_{i,n}\in\partial_{\gamma_{n}}(\lambda_{i}f_{i})(\overline{x}),$ $\overline{w}^{*}_{i,n}\in\partial_{\gamma_{n}}(\lambda_{i}\nu_{i}(-g_{i}))(\overline{x}),$
$\overline{c}_{n}^{*}\in N_{\gamma_{n}}(\overline{x},C),$ $\overline{y}_{n}^{*}\in N_{\gamma_{n}}(h(\overline{x}),-Y_{+}),$ $\overline{u}_{n}^{*}\in\partial_{\gamma_{n}}(-v_{n}^{*}\circ h)(\overline{x})$ then by applying Theorem \ref{thibault}, we obtain the existence of  ${x}_{i,n}\in\mathrm{dom}(\lambda_{i}f_{i})=X,$
${w}_{i,n}\in\mathrm{dom}(\lambda_{i}\nu_{i}(-g_{i}))=X,$
${c}_{n}\in C,$ $y_{n}\in -Y_{+},$
${u}_{n}\in\mathrm{dom}(-v_{n}^{*}\circ h),$
${x}^{*}_{i,n}\in\partial_{}(\lambda_{i}f_{i})({x_{i,n}}),$
${w}^{*}_{i,n}\in\partial_{}(\lambda_{i}\nu_{i}(-g_{i}))({w_{i,n}}),$
${c}_{n}^{*}\in N_{}({c}_{n},C),$  ${y}_{n}^{*}\in N_{}(y_{n},-Y_{+}),$ ${u}_{n}^{*}\in\partial(-v_{n}^{*}\circ h)(u_{n})$ such that

\begin{numcases}{}
\begin{multlined}
\|x_{i,n} - \bar x\|_X \leq\sqrt{\gamma_{n}},  \;   \|w_{i,n} - \bar x\|_X \leq\sqrt{\gamma_{n}},  \;  \|c_{n} - \bar x\|_X \leq\sqrt{\gamma_{n}}, \; \|u_{n} - \bar x\|_X \leq\sqrt{\gamma_{n}}\\
\|y_{n} - h(\bar x)\|_Y \leq\sqrt{\gamma_{n}},\qquad\quad\qquad\quad\qquad\quad\qquad\quad\qquad\quad\qquad\quad\qquad\quad\qquad\quad
\end{multlined}\label{13}\\
\begin{multlined}
\|x^{*}_{i,n} - \bar x^{*}_{i,n}\|_{X^*} \leq\sqrt{\gamma_{n}},\; \|w^{*}_{i,n} - \bar w^{*}_{i,n}\|_{X^*} \leq\sqrt{\gamma_{n}},  \|c^{*}_{n} - \bar c^{*}_{n}\|_{X^*} \leq\sqrt{\gamma_{n}}\\
\|u^{*}_{n} - \bar u^{*}_{n}\|_{X^*} \leq\sqrt{\gamma_{n}}, \; \|y^{*}_{n} - \bar y^{*}_{n}\|_{Y^*} \leq\sqrt{\gamma_{n}},
\qquad\quad\qquad\quad\qquad\quad\qquad\quad
\end{multlined}\label{14}\\
\left.
    \begin{array}{ll}
\mid\lambda_{i}f_{i}(x_{i,n})-\langle x_{i,n}^*,x_{i,n}-\bar x\rangle -\lambda_{i} f_{i}(\bar x)\mid\leq2\gamma_{n}\\
\mid-\lambda_{i}\nu_{i}g_{i}({w}_{i,n})-\langle w_{i,n}^*,w_{i,n}-\bar x\rangle+\lambda_{i}\nu_{i}g_{i}(\bar x)\mid\leq2\gamma_{n}\\
\mid\delta^{}_{C}(c_{n})-\langle c_{n}^*,c_{n}-\bar x\rangle - \delta^{}_{C}(\bar x)\mid\leq2\gamma_{n}\label{1a1}\\
\mid\delta^{}_{-Y_{+}}(y_{n})-\langle y_{n}^*,y_{n}-h(\bar x)\rangle - \delta^{}_{-Y_{+}}(h(\bar x))\mid\leq2\gamma_{n}\\
\mid(-v_{n}^{*}\circ h)(u_{n})-\langle u_{n}^{*},u_{n}-\bar x\rangle-(-v_{n}^{*}\circ h)(\bar x)\mid\leq 2\gamma_{n}
\end{array}
\right\}.\label{15}
\end{numcases}

By letting $n\longmapsto+\infty,$ we get from (\ref{13}) and (\ref{15}) that

\begin{numcases}{}
x_{i,n} \stackrel {\|.\|_{X}}{\longrightarrow} \bar x,\  w_{i,n} \stackrel {\|.\|_{X}}{\longrightarrow} \bar x,\  c_{n} \stackrel {\|.\|_{X}}{\longrightarrow} \bar x, u_{n} \stackrel {\|.\|_{X}}{\longrightarrow} \bar x,\ y_{n} \stackrel {\|.\|_{Y}}{\longrightarrow} h(\bar x)\nonumber\\
\lambda_{i}f_{i}(x_{i,n})-\langle x_{i,n}^*,x_{i,n}-\bar x\rangle \stackrel {}{\longrightarrow} \lambda_{i} f_{i}(\bar x),\nonumber\\
-\lambda_{i}\nu_{i}g_{i}({w}_{i,n})-\langle w_{i,n}^*,w_{i,n}-\bar x\rangle \stackrel {}{\longrightarrow} -\lambda_{i}\nu_{i}g_{i}(\bar x),\nonumber\\
\left.
    \begin{array}{ll}
\delta^{}_{C}(c_{n})-\langle c_{n}^*,c_{n}-\bar x\rangle \stackrel {}{\longrightarrow} \delta^{}_{C}(\bar x).\\
\delta^{}_{-Y_{+}}(y_{n})-\langle y_{n}^*,y_{n}-h(\bar x)\rangle{\longrightarrow} \delta^{}_{-Y_{+}}(h(\bar x))\end{array}
\right\}\label{16}\\
(-v_{n}^{*}\circ h)(u_{n})-\langle u_{n}^{*},u_{n}-\bar x\rangle-(-v_{n}^{*}\circ h)(\bar x) {\longrightarrow}0.\nonumber
\end{numcases}

Since $c_{n}\in C,\; y_{n}\in-Y_{+}$ and $\overline{x}\in C\cap h^{-1}(-Y_+),$ the expression (\ref{16}) reduces to
\[\langle c_{n}^*,c_{n}-\bar x\rangle \stackrel {}{\longrightarrow} 0,\quad
\langle y_{n}^*,y_{n}-h(\bar x)\rangle{\longrightarrow} 0.\]

Moreover, we have

\begin{eqnarray*}
  \|y^{*}_{n}+v^{*}_{n}\|_{Y^*} &=& \| y^{*}_{n}-\overline{y}_{n}^{*}+v^{*}_{n}+\overline{y}_{n}^{*}\|_{Y^*} \leq\| y^{*}_{n}-\overline{y}_{n}^{*}\|_{Y^*}+\|\overline{y}_{n}^{*}+{v}_{n}^{*} \|_{Y^*},
\end{eqnarray*}
and
$ $\newpage
\begin{eqnarray*}
  && \|\displaystyle \sum_{i=1}^{m}x^{*}_{i,n}+\displaystyle \sum_{i=1}^{m}w^{*}_{i,n}+c_{n}^{*}+u^{*}_{n}\|_{X^*} \\
   &&=\|\displaystyle \sum_{i=1}^{m}x^{*}_{i,n}-\displaystyle\sum_{i=1}^{m}\overline{x}_{i,n}^{*}+\displaystyle \sum_{i=1}^{m}w^{*}_{i,n}-\displaystyle \sum_{i=1}^{m}\overline{w}^{*}_{i,n}+c^{*}_{n}-\overline{c}^{*}_{n}+u^{*}_{n}- \overline{u}_{n}^{*}       +\displaystyle\sum_{i=1}^{m}\overline{x}_{i,n}^{*}\\
   &&\qquad\qquad\qquad\qquad+\displaystyle\sum_{i=1}^{m}\overline{w}_{i,n}^{*}+\overline{c}_{n}^{*}+\overline{u}_{n}^{*}\|_{X^*} \\
   &&\leq  \displaystyle\sum_{i=1}^{m}\|\displaystyle x^{*}_{i,n}-\overline{x}_{i,n}^{*}\|_{X^*}+\displaystyle\sum_{i=1}^{m}\|\displaystyle w^{*}_{i,n}-\overline{w}_{i,n}^{*}\|_{X^*}+\|c^{*}_{n}-\overline{c}_{n}^{*}\|_{X^*}+\|u^{*}_{n}-\overline{u}_{n}^{*}\|_{X^*}      \\
   &&\qquad\qquad+\|\displaystyle\sum_{i=1}^{m}\overline{x}_{i,n}^{*}+\displaystyle\sum_{i=1}^{m}\overline{w}_{i,n}^{*}+\overline{c}^{*}_{n}+\overline{u}_{n}^{*}\|_{X^*}.\\
\end{eqnarray*}

\noindent and hence by letting $n\longrightarrow +\infty,$ it follows from (\ref{in1}), (\ref{in2}) and (\ref{14}) that

\begin{numcases}{}
 y^{*}_{n}+v^{*}_{n} \xrightarrow{\|.\|_{Y^{*}}}0,\nonumber\\
\displaystyle\sum_{i=1}^{m}x^{*}_{i,n}+\displaystyle\sum_{i=1}^{m}w^{*}_{i,n}+c_{n}^{*}+u^{*}_{n} \xrightarrow{\|.\|_{X^{*}}}0.\nonumber
\end{numcases}
Conversely, assume that the preceding sequential optimality conditions hold. Then, for any $x\in C\cap h^{-1}(-Y_{+}),$ $y\in -Y_{+}$ and any
positive integer $n,$ we have
\begin{eqnarray*}
  \lambda_{i}f_{i}(x) &\geq& \lambda_{i}f_{i}(x_{i,n})+\langle x^{*}_{i,n},x-x_{i,n}\rangle,\;\; (i=1,\hdots,m)\\
   \lambda_{i}\nu_{i}(-g_{i})(x)&\geq&\lambda_{i}\nu_{i}(-g_{i})(w_{i,n})+\langle w^{*}_{i,n},x-w_{i,n}\rangle,\;\; (i=1,\hdots,m) \\
  0 &\geq&\langle c^{*}_{n},x-c_{n}\rangle \\
  0 &\geq&\langle y^{*}_{n},y-y_{n}\rangle \\
0&\geq&-(-v_{n}^{*}\circ h)(x)+(-v_{n}^{*}\circ h)(u_{n})+\langle u^{*}_{n},x-u_{n}\rangle.
\end{eqnarray*}
By adding these inequalities and taking for all $x\in C\cap h^{-1}(-Y_{+}),\;y:=h(x)$, we get
\begin{eqnarray*}
\displaystyle\sum_{i=1}^{m}\lambda_{i}f_{i}(x)+\displaystyle\sum_{i=1}^{m}\lambda_{i}\nu_{i}(-g_{i})(x)&\geq& \displaystyle\sum_{i=1}^{m}\left(\lambda_{i}f_{i}(x_{i,n})-\langle x^{*}_{i,n},x_{i,n}-\overline{x}\rangle\right)\\
&&+\displaystyle\sum_{i=1}^{m}\left(\lambda_{i}\nu_{i}(-g_{i})(w_{i,n})-\langle w^{*}_{i,n},w_{i,n}-\overline{x}\rangle\right)\\
&&-\langle c^{*}_{n},c_{n}-\overline{x}\rangle-\langle y^{*}_{n},y_{n}-h(\overline{x})\rangle-\langle u^{*}_{n},u_{n}-\overline{x}\rangle\\
&&-\langle v_{n}^{*},h(u^{*}_{n})-h(\overline{x})\rangle+\langle y^{*}_{n}+v_{n}^{*},h(x)-h(\overline{x})\rangle\\
&&+\langle \displaystyle\sum_{i=1}^{m} x^{*}_{i,n}+\displaystyle\sum_{i=1}^{m} w^{*}_{i,n}+c_{n}^{*}+u^{*}_{n},x-\overline{x}\rangle.
\end{eqnarray*}

Passing to the limit as $n\longmapsto+\infty$, we obtain
\[
  \displaystyle\sum_{i=1}^{m}\lambda_{i}(f_{i}+\nu_{i}(-g_{i}))(x)-\displaystyle\sum_{i=1}^{m}\lambda_{i}(f_{i}+\nu_{i}(-g_{i}))(\overline{x})\geq 0,\;\forall x\in C\cap h^{-1}(-Y_{+})
  \]
i.e.
\begin{multline*}
  \displaystyle\sum_{i=1}^{m}\lambda_{i}(f_{i}+\nu_{i}(-g_{i}))(x)+\delta_{C}(x)+\delta_{-Y_{+}}\circ h(x)-\displaystyle\sum_{i=1}^{m}\lambda_{i}(f_{i}+\nu_{i}(-g_{i}))(\overline{x})-\delta_{C}(\bar{x})\\-\delta_{-Y_{+}}\circ h(\bar{x}) \geq 0,\; \forall x\in X.
  \end{multline*}

By using the similar arguments used in the proof of the converse of Theorem \ref{4.3}, we get
\[0\in\partial^{p}\left( (f_{1}-\nu_{1}g_{1},\hdots,f_{m}-\nu_{m}g_{m})+\delta_{C}^{v}+\delta^{v}_{-Y_{+}}\circ h\right)(\overline{x}),\]
which means that $\overline{x}$ is Henig properly efficient solution for problem $(P_{\overline{x}})$ and by Lemma \ref{l61}, $\overline{x}$ is Henig properly efficient solution for problem $(P)$. The proof is complete.
\end{proof}
We now illustrate the above results with the help of an example of multiobjective fractional programming problem, where the standard Lagrange multiplier condition can not be derived due to the lack of constraint qualification and the sequential conditions hold. For this, we will need to establish the standard necessary and sufficient optimality conditions for a feasible point $\overline{x}$ to be an efficient solution for problem (P) under a constraint qualification.

\begin{theorem}\label{optimalite classic}
Let $f_{i}, \; \mathrm{-}g_{i}$ : $X\longrightarrow\mathbb{R}$ be $2m$ convex functions such that $f_{i}(x)\geq0$ for any $x\in C\cap h^{-1}(-Y_{+})$ $(i=1,\hdots,m)$ and $h$ : $X\longrightarrow Y\cup\{+\infty_{Y}\}$ be a proper and $Y_{+}$-convex mapping. Let us consider the following constraint qualification

\begin{empheq}[left= (C.Q.M_{0}.R_{0})\empheqlbrace]{align*}
& (X,\|.\|_X)\  \text{and}\ (Y,\|.\|_Y) \text{ are two real reflexive
Banach spaces,}\\
& \exists a\in C\cap \mathrm{dom}h\ \text{such that} \\
& h(a)\in-\mathrm{int}Y_{+}.
\end{empheq}

\noindent Suppose that $\mathrm{int}Y_{+}\neq \emptyset$ ($\mathrm{int}Y_{+}$ stands for the topological interior of $Y_{+}$) and the constraint qualification $(C.Q.M_{0}.R_{0})$ is satisfied. Then $\overline{x}\in C\cap h^{-1}(-Y_{+})$ is
Henig properly efficient solution for problem (P), if and only if, there exists $y^{*}\in Y_{+}^{*}$ such that $\langle y^{*}, h(\overline{x})\rangle=0$ and
\[ 0\in \partial(\displaystyle\sum_{i=1}^{m}\left(\lambda_{i}(f_{i}-\nu_{i}g_{i}) +\delta_{C}+y^{*}\circ h\right)(\overline{x}).\]
\end{theorem}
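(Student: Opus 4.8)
The plan is to run the same reduction used in the proofs of Theorems~\ref{4.2}--\ref{4.4}, but --- since the Slater-type hypothesis $(C.Q.M_{0}.R_{0})$ is now in force --- to replace the sequential closure coming from Theorem~\ref{epiofthesum} by an \emph{exact} Moreau--Rockafellar sum rule together with the exact chain rule for a composition with $h$. First I would invoke Lemma~\ref{l61} to replace $(P)$ by the convex vector problem $(P_{\overline{x}})$, and then use $\overline{x}\in E^{p}(f,X)\Longleftrightarrow 0\in\partial^{p}f(\overline{x})$ to restate Henig proper efficiency of $\overline{x}$ for $(P)$ as
\[
0\in\partial^{p}\!\left(F_{\overline{x}}+\delta^{v}_{C}+\delta^{v}_{-Y_{+}}\circ h\right)(\overline{x}),\qquad F_{\overline{x}}:=\bigl(f_{1}-\nu_{1}g_{1},\dots,f_{m}-\nu_{m}g_{m}\bigr).
\]
Because this mapping is $\mathbb{R}^{m}_{+}$-convex, the scalarization Theorem~\ref{scalarization} converts it into: there exists $z^{*}=(\lambda_{1},\dots,\lambda_{m})\in((\mathbb{R}_{+}^{m})^{*})^{\circ}=(\mathbb{R}_{+}\setminus\{0\})^{m}$ with $0\in\partial\bigl(z^{*}\circ(F_{\overline{x}}+\delta^{v}_{C}+\delta^{v}_{-Y_{+}}\circ h)\bigr)(\overline{x})$, and using $z^{*}\circ\delta^{v}_{C}=\delta_{C}$, $z^{*}\circ\delta^{v}_{-Y_{+}}=\delta_{-Y_{+}}$ and the linearity of $z^{*}$ this reads
\[
0\in\partial\!\left(\sum_{i=1}^{m}\lambda_{i}(f_{i}-\nu_{i}g_{i})+\delta_{C}+\delta_{-Y_{+}}\circ h\right)(\overline{x}).
\]

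Next I would unpack this inclusion with exact convex calculus. The function $\Phi:=\sum_{i=1}^{m}\lambda_{i}(f_{i}-\nu_{i}g_{i})$ is finite-valued, convex and lower semicontinuous, hence continuous on the Banach space $X$, so the Moreau--Rockafellar theorem gives $\partial(\Phi+\delta_{C}+\delta_{-Y_{+}}\circ h)(\overline{x})=\partial\Phi(\overline{x})+\partial(\delta_{C}+\delta_{-Y_{+}}\circ h)(\overline{x})$. Since $(C.Q.M_{0}.R_{0})$ provides $a\in C\cap\mathrm{dom}\,h$ with $h(a)\in-\mathrm{int}\,Y_{+}$ --- a generalized Slater/Robinson qualification in the reflexive setting --- the sum rule for $\delta_{C}$ and $\delta_{-Y_{+}}\circ h$, and the chain rule for $\delta_{-Y_{+}}\circ h$ (note $\delta_{-Y_{+}}$ is convex, lsc and $Y_{+}$-nondecreasing), both hold with equality; combined with $\partial\delta_{-Y_{+}}(h(\overline{x}))=N(h(\overline{x}),-Y_{+})=\{y^{*}\in Y_{+}^{*}:\langle y^{*},h(\overline{x})\rangle=0\}$, this gives
\[
\partial(\delta_{C}+\delta_{-Y_{+}}\circ h)(\overline{x})=N(\overline{x},C)+\!\!\bigcup_{\substack{y^{*}\in Y_{+}^{*}\\\langle y^{*},h(\overline{x})\rangle=0}}\!\!\partial(y^{*}\circ h)(\overline{x}).
\]
Hence the previous inclusion holds iff there is $y^{*}\in Y_{+}^{*}$ with $\langle y^{*},h(\overline{x})\rangle=0$ and $0\in\partial\Phi(\overline{x})+N(\overline{x},C)+\partial(y^{*}\circ h)(\overline{x})$, and a last application of the continuity-based Moreau--Rockafellar rule re-assembles the right-hand side as $\partial\bigl(\sum_{i=1}^{m}\lambda_{i}(f_{i}-\nu_{i}g_{i})+\delta_{C}+y^{*}\circ h\bigr)(\overline{x})$, which is the asserted condition (the $\lambda_{i}$ being those produced by scalarization).

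The converse is obtained by reading this chain of equivalences in reverse: for the given $\lambda_{i}$ and $y^{*}$ the exact calculus rules yield $0\in\partial\bigl(z^{*}\circ(F_{\overline{x}}+\delta^{v}_{C}+\delta^{v}_{-Y_{+}}\circ h)\bigr)(\overline{x})$ with $z^{*}=(\lambda_{1},\dots,\lambda_{m})\in((\mathbb{R}_{+}^{m})^{*})^{\circ}$, hence $0\in\partial^{p}(F_{\overline{x}}+\delta^{v}_{C}+\delta^{v}_{-Y_{+}}\circ h)(\overline{x})$ by Theorem~\ref{scalarization}, i.e. $\overline{x}$ is Henig properly efficient for $(P_{\overline{x}})$, and Lemma~\ref{l61} carries this back to $(P)$. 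I expect the one genuinely delicate point to be the passage from the scalar inclusion to the exact subdifferential formula: one must verify that $(C.Q.M_{0}.R_{0})$ is precisely the qualification that, in a merely reflexive Banach space, simultaneously validates with equality the sum rule $\partial(\delta_{C}+\delta_{-Y_{+}}\circ h)=N(\cdot,C)+\partial(\delta_{-Y_{+}}\circ h)$ and the chain rule for $\delta_{-Y_{+}}\circ h$, and that the union over the multipliers $y^{*}$ may be moved outside the sum; the remaining steps reuse the bookkeeping already carried out in the earlier proofs.
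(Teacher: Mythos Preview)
Your approach is essentially the paper's: reduce $(P)$ to $(P_{\overline{x}})$ via Lemma~\ref{l61}, scalarize via Theorem~\ref{scalarization} to reach $0\in\partial\bigl(\sum_{i}\lambda_i(f_i-\nu_ig_i)+\delta_C+\delta_{-Y_+}\circ h\bigr)(\overline{x})$, and then use $(C.Q.M_0.R_0)$ to apply exact subdifferential calculus and extract the multiplier $y^*\in N(h(\overline{x}),-Y_+)=\{y^*\in Y_+^*:\langle y^*,h(\overline{x})\rangle=0\}$. The only packaging difference is that the paper never separates $\delta_C$ from the rest: keeping $\psi:=\sum_{i}\lambda_i(f_i-\nu_ig_i)+\delta_C$ intact, it observes that $\delta_{-Y_+}$ is continuous at $h(a)$ and invokes a single sum--composite rule of Combari--Laghdir--Thibault to pass directly from $\partial(\psi+\delta_{-Y_+}\circ h)(\overline{x})$ to $\bigcup_{y^*\in N(h(\overline{x}),-Y_+)}\partial(\psi+y^*\circ h)(\overline{x})$, thereby sidestepping precisely the indicator--indicator sum rule you flagged as delicate.
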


\begin{proof}
Following the proof of Theorem \ref{4.2}, we have $\overline{x}$ is Henig properly efficient solution of $(P)$ if and only if,
\begin{equation}
0\in\partial^{p}\left(F_{\overline{x}}+\delta^{v}_{C}+\delta^{v}_{-Y_{+}}\circ h\right)(\overline{x})
\end{equation}
\noindent where $F_{\overline{x}}\ :\ X\longrightarrow\mathbb{R}^{m}$ is defined for any $x\in X$ by
 \[F_{\overline{x}}(x):=\left(f_{1}(x)-\nu_{1}g_{1}(x),\hdots,f_{m}(x)-\nu_{m}g_{m}(x)\right).\]
According to scalarization Theorem \ref{scalarization}, there exists $z^{*}=(\lambda_{1},\hdots,\lambda_{m})\in
(\mathbb{R}_{+}\setminus\{0\})^{m}$ such that

\begin{equation}\label{19}
   0\in\partial_{}(z^{*}\circ F_{\overline{x}}+z^{*}\circ \delta^{v}_{C}+z^{*}\circ
   \delta^{v}_{-Y_{+}}\circ h)(\overline{x}).
\end{equation}

\noindent Since  $z^{*}\circ \delta_{C}^{v}=\delta_{C}$ and $z^{*}\circ
\delta_{-Y_{+}}^{v}=\delta_{-Y_{+}},$  therefore we obtain

\[ 0\in \partial(\displaystyle\sum_{i=1}^{m}\left(\lambda_{i}(f_{i}-\nu_{i}g_{i}) +\delta_{C}+\delta_{-Y_{+}}\circ h\right)(\overline{x}).\]
\noindent The constraint qualification $(C.Q.M_{0}.R_{0})$ show that the indicator function $\delta_{-Y_{+}}$ is continuous at $h(a)$ and by applying a formula in \cite{combari} by Combari-Laghdir-Thibault, concerning the computation of the subdifferential of the composite of a nondecreasing convex function with a convex mapping taking values in a partially ordered topological vector space, there exists $y^{*}\in \partial\delta_{-Y_{+}}(h(\overline{x}))=N(h(\overline{x}),-Y_{+})$ such that
\[0\in \partial(\displaystyle\sum_{i=1}^{m}\left(\lambda_{i}(f_{i}-\nu_{i}g_{i}) +\delta_{C}+y^{*}\circ h\right)(\overline{x}).\]

\noindent The condition $y^{*}\in N(h(\overline{x}),-Y_{+})$ is equivalent to
$y^{*}\in Y^{*}_{+}$ and $\langle y^{*}, h(\overline{x})\rangle=0.$
\end{proof}
We now give an example of multiobjective fractional programming problem, where the standard optimality condition can not be derived due to the lack of constraint
qualification and the sequential optimality conditions hold.
\begin{example}Let us consider the following multiobjective fractional problem\\

\begin{empheq}[left=(\mathrm{Q})\empheqlbrace]{align*}
&  \displaystyle \inf \left(\frac{2x}{y+3},\frac{2x}{y^{2}+1}\right)\\
& \left(\left(\max{\left\{0,x\right\}}\right)^{2},\sqrt{x^{2}+y^{2}}-y\right)\in-\mathbb{R}_{+}^{2}\\
& (x,y)\in\mathbb{R}_{+}\times[0,1] ,
\end{empheq}
 where $h(x,y)=\left(\left(\max{\left\{0,x\right\}}\right)^{2},\sqrt{x^{2}+y^{2}}-y\right),$ $f_{1}(x,y)=2x,$ $f_{2}(x,y)=-2x,$ $g_{1}(x,y)$ $=y+3,$ $g_{2}(x,y)=-y^{2}-1$ and $C=\mathbb{R}_{+}\times[0,1].$ The euclidian space $Y = \mathbb{R}^{2}$ is equipped with the natural order induced by the nonnegative orthant $Y_{+} = \mathbb{R}^{2}_{+}.$ Obviously $Y_{+}^{*}=\mathbb{R}_{+}^{2}.$
 Let $(\overline{x},\overline{y})=\left(0,\frac{1}{2}\right)$ be a feasible point and $\nu_{i}=\frac{f_{i}(\overline{x},\overline{y})}{g_{i}(\overline{x},\overline{y})}\; (i=1,2).$ Then $\nu_{1}=\nu_{2}=0.$ Let us observe that for any $(x,y)\in C,$ we have $h(x,y)\in\mathbb{R}_{+}^{2},$ and hence the feasible set of problem $(Q)$ is given by $S=\{0\} \times[0,1]$ which yields that the constraint qualification $(C.Q.M_{0}.R_{0})$  does not hold. By taking $(\lambda_{1},\lambda_{2}):=(1,1),\;v_{n}^{*}:=(0,0),$ it follows that
the epigraph of the conjugate functions turn out to be
$\mathrm{epi}(\lambda_{1}f_{1})^{*}=\mathrm{epi}f_{1}^{*}=\{(2,0)\}\times[0,+\infty[,$ $\mathrm{epi}(\lambda_{2}f_{2})^{*}=\mathrm{epi}f_{2}^{*}=\{(-2,0)\}\times[0,+\infty[,$ $\mathrm{epi}(\lambda_{i}\nu_{i}(-g_{i}))^{*}=\{(0,0)\}\times[0,\;+\infty[,$  $\mathrm{epi}\delta_{C}^{*}=\displaystyle\bigcup_{\alpha>0}\{((0,\alpha),\alpha)\}\cup\{0\}\times\mathbb{R}\times[0,\;+\infty[,$ $\mathrm{epi}(-v_{n}^{*}\circ h)^{*}=\{(0,0)\}\times[0,+\infty[.$ For $i=1,2,$ by taking $(x^{*}_{1,n},a_{1,n}):=((2,0),\frac{1}{n})\in\mathrm{epi}(\lambda_{1}f_{1})^{*},\; (x^{*}_{2,n},a_{2,n}):=((-2,0),\frac{1}{n})\in\mathrm{epi}(\lambda_{2}f_{2})^{*},\;(w^{*}_{i,n},b_{i,n}):=((0,0),\frac{1}{n})\in\mathrm{epi}(\lambda_{i}\nu_{i}(-g_{i}))^{*},
\;(c^{*}_{n},d_{n}):=((0,\frac{1}{n}),\frac{1}{n})\in\mathrm{epi}\delta_{C}^{*},\\y^{*}_{n}:=(0,0)\in (\mathbb{R}_{+})^{2},\;s_{n}:=\frac{1}{n}\in\mathbb{R}_{+},$ and $(u^{*}_{n},t_{n})=((0,0),0)\in\mathrm{epi}(-v_{n}^{*}\circ h)^{*}$ such that
\begin{numcases}{}
\displaystyle\sum_{i=1}^{2}x_{i,n}^{*}+\displaystyle\sum_{i=1}^{2}w_{i,n}^{*}+c_{n}^{*}+u_{n}^{*}=(0,\frac{1}{n})\xrightarrow{{\parallel.\parallel_{\mathbb{R}^{2}}}}0\nonumber\\
y_{n}^{*}+v_{n}^{*}=(0,0)\xrightarrow{{\parallel.\parallel_{\mathbb{R}^{2}}}}0\nonumber\\
\displaystyle\sum_{i=1}^{2}a_{i,n}+\displaystyle\sum_{i=1}^{2}b_{i,n}+d_{n}+t_{n}+s_{n}=\frac{6}{n}\xrightarrow{\qquad}0.\nonumber
\end{numcases}

Therefore,  by Theorem \ref{4.2}  the point  $(\overline{x},\overline{y})$ is a Henig efficient solution for $(Q).$
\end{example}



\end{document}